\documentclass[a4paper]
{article}%[12pt,psamsfonts]

\usepackage{amssymb,amsmath, txfonts,mathrsfs}
\usepackage{xcolor}
\headheight=0in \headsep = -0,25in \topmargin=-0.2in
\textwidth=7.15in%7.05in
\textheight=10.35in%\textwidth
%=9.9in
\oddsidemargin=-0.35in%
%-0.05in
\evensidemargin=-0.35in%
%-0.05in
\parindent=0.2in
\makeindex

\input xy  \xyoption{all}

%----- Environments-----------------------
\newtheorem{deff}{Definition}[section]
\newtheorem{lemma}[deff]{Lemma}

\newtheorem{theorem}[deff]{Theorem}

%[subsection]
\newtheorem{corollary}[deff]{Corollary}

%[subsection]

\newtheorem{proposition}[deff]{Proposition}

\newtheorem{em-example}[deff]{Example}
\newtheorem{em-examples}[deff]{Examples}
\newtheorem{em-def}[deff]{Definition}        % definition(auxiliary)
\newtheorem{em-remark}[deff]{Remark}         % remark(auxiliary)
\newtheorem{em-remarks}[deff]{Remarks}
\newtheorem{em-question}[deff]{Question}

\newtheorem{problem}[deff]{Problem}
\newtheorem{claim}[deff]{Claim}
%[subsection]

\newenvironment{example}{\begin{em-example} \em }{ \end{em-example}}
\newenvironment{examples}{\begin{em-examples} \em }{ \end{em-examples}}
\newenvironment{definition}{\begin{em-def} \em  }{ \end{em-def}}
\newenvironment{remark}{\begin{em-remark} \em }{\end{em-remark}}

\newenvironment{question}{\begin{em-question}\em }{\end{em-question}}
\newenvironment{proof}{\noindent {\it Proof}:}{\QED \smallskip}
%\newtheorem{problem}{\begin{em-problem} \em  }{ \end{em-problem}}

%%%%%%%%%%%%%%%%%%%%%%%%%  MACROS  %%%%%%%%%%%%%%%%%%%%%%%%
\newcommand{\ol}[1]{\overline{#1}}

\newcommand{\wt}[1]{\widetilde{#1}}
\newcommand{\dis}{\displaystyle}

\newcommand{\tr}{^{\triangleright}}

\newcommand{\VP}{\mathbf{VP}}
\newcommand\QED{\hfill QED \medskip}

\newcommand{\abs}[1]{\left| #1 \right|}

%%%%%%%%%%%%%%%%% M M %%%%%%%%%%%%%%%%%%%%%%%

\def\hull#1{\langle#1\rangle}

\def\sign{\mathop{\rm sign}}
\def\CHom{\mathop{\rm CHom}}
\def\Hom{\mathop{\rm Hom}}

\def\:{\nobreak \hskip .1111em\mathpunct {}\nonscript \mkern
-\thinmuskip {:}\hskip .3333emplus.0555em\relax}

\catcode`\@=12
\def\T{{\mathbb T}}
\def\Q{{\mathbb Q}}

\def\Z{{\mathbb Z}}
\def\N{{\mathbb N}}
\def\R{{\mathbb R}}
\def\Q{{\mathbb Q}}

\def\C{{\mathbb C}}

\def\G{\mathcal{G}}

\def\cont{\mathfrak c}
\def\c{\cont}

\def\taub{\tau_\mathbf{b}}

\def\b{\mathbf{b}}

\def\Zpinfty{\Z({p}^\infty)}

\def\LQC{\mathcal{L}qc}

\def\Lin{\mathcal{L}in}
\def\Lp{\mathcal{L}pc}
\def\B
%{\mathfrak{B}}
{\overline{\mathcal{V}}_{abs}}
\def\tauc{\tau_\mathbf{c}}

\title{``Varopoulos paradigm": Mackey property vs.  metrizability in topological groups.} %\thanks{provisional title, may change depending on the contents}}
\author{L. Au\ss{}enhofer\footnote{e-mail: lydia.aussenhofer@uni-passau.de}, D. de la Barrera Mayoral\footnote{ORCID of the second author: 0000-0002-0024-5265. e-mail: dbarrera@mat.ucm.es }, D. Dikranjan
\footnote{The third named author was supported by the grant ``Progetti di Eccellenza 2011/12" of Fondazione CARIPARO. e-mail: dikran.dikranjan@uniud.it},
E. Mart\' \i n Peinador\footnote{ORCID of the fourth author: 0000-0003-0162-0394. e-mail: peinador@mat.ucm.es\newline The first and the third named authors are partially supported by Ministerio de Econom\'ia y Competitividad grant: MTM2013-42486-P.}}

\begin{document}

\maketitle
%\tableofcontents

\begin{abstract}
The class of all locally quasi-convex (lqc)  abelian groups  contains all locally convex vector spaces (lcs) considered as topological groups. Therefore it is natural to extend classical properties of locally convex spaces to this larger class of abelian topological groups. In the present paper we consider the following well known property of lcs: "A metrizable locally convex space carries its Mackey topology ". This claim cannot be extended to lqc-groups in the natural way, as we have recently proved with other coauthors (\cite{AD}, \cite{DMPT} ). We say that an abelian  group $G$ satisfies the
\emph{ Varopoulos paradigm} (VP) if any metrizable locally quasi-convex topology on $G$ is the Mackey topology. Surprisingly VP - which is a topological property of the group - is characterizes an algebraic feature, namely being of finite exponent.
.
%For our proofs we take advantage of the following fact: the Bohr topology of a bounded group has as neighborhood basis of zero, the open finite index subgroups.

\end{abstract}

\noindent\textbf{Keywords:} Metrizable abelian groups, locally quasi-convex topologies, torsion groups, precompact topologies, locally convex spaces, Mackey topology.

\noindent\textbf{MSC Classification: }Primary: 22A05, 43A40. Secondary: 20E34, 20K45.

\section{Introduction.}
\subsection{Historical background and some facts on the Mackey topology.}

 Locally convex topologies  on linear  spaces are a topic which has boosted a wide research in Functional Analysis. It
 was proved by G. Mackey \cite{mack46} that the family of compatible locally convex topologies on a locally convex space has a top  element. That is, for a (real or complex)  locally convex  space $E$ with dual space $E'$, there exists a locally convex topology $\mu=\mu(E,E')$ with $(E,\mu)'=E'$, such that any other   locally convex topology $\nu$ on $E$
 the  dual of which is $E'$, satisfies the inequality $\nu\leq\mu$. The topology $\mu$ satisfying these conditions is named the {\em Mackey topology} of $E$. The following result will be the inspiring source for our reflections on abelian topological groups.

\begin{theorem}\label{MackeyThm}
\cite[Corollary 22.3, page 210]{LTS}
Metrizable locally convex linear topological spaces carry their Mackey topology.
\end{theorem}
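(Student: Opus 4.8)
The plan is to work inside the Mackey--Arens framework. Write $\tau$ for the given metrizable locally convex topology on $E$ and $\mu=\mu(E,E')$ for its Mackey topology. Since $\mu$ is by construction the finest locally convex topology compatible with the duality $\langle E,E'\rangle$ and $\tau$ is itself compatible, the inequality $\tau\le\mu$ holds for free; the entire content of the theorem is the reverse inequality $\mu\le\tau$. I would establish this by factoring it through the notion of a \emph{bornological} space (one in which every absolutely convex bornivorous set is a neighbourhood of $0$): first show that a metrizable locally convex space is bornological, and then show that a bornological space carries its Mackey topology.

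For the first step, let $(U_n)_{n\in\N}$ be a decreasing base of absolutely convex closed $\tau$-neighbourhoods of $0$, and let $A$ be an absolutely convex bornivorous subset of $E$. Arguing by contradiction, suppose $A$ is not a $\tau$-neighbourhood. Then $\tfrac1n U_n\not\subseteq A$ for every $n$ (otherwise $A$ would contain a neighbourhood), so one may pick $y_n\in\tfrac1n U_n\setminus A$ and set $x_n:=n\,y_n\in U_n$. The key point, and the only place where metrizability is genuinely used, is that $(x_n)$ is a bounded sequence: given any neighbourhood $V=U_m$, one has $x_n\in U_n\subseteq U_m$ for all $n\ge m$, while the finitely many remaining terms are absorbed by $V$. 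Since $A$ is bornivorous it absorbs $\{x_n\}$, say $\{x_n\}\subseteq MA$; but then for $n>M$ we get $y_n=\tfrac1n x_n\in\tfrac{M}{n}A\subseteq A$ by absolute convexity, contradicting $y_n\notin A$. Hence every absolutely convex bornivorous set is a $\tau$-neighbourhood, i.e. $(E,\tau)$ is bornological.

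For the second step I would invoke the classical theorem of Mackey that all locally convex topologies compatible with a given duality share the same bounded sets; in particular $\tau$ and $\mu$ have exactly the same bounded subsets of $E$. Now take any absolutely convex $\mu$-neighbourhood $V$ of $0$. Being a neighbourhood, $V$ absorbs every $\mu$-bounded set, hence every $\tau$-bounded set, so $V$ is $\tau$-bornivorous; since $(E,\tau)$ is bornological, $V$ is therefore a $\tau$-neighbourhood. As such $V$ form a base of $\mu$-neighbourhoods of $0$, this yields $\mu\le\tau$, and combined with $\tau\le\mu$ we conclude $\tau=\mu$. The main obstacle to watch is precisely the legitimacy of this last comparison: one must make sure metrizability is really consumed in the bornological lemma and that the coincidence of bounded sets is available, since the superficially tempting alternative --- trying to show directly that every $\sigma(E',E)$-compact absolutely convex subset of $E'$ is equicontinuous --- runs into the failure of metrizable spaces to be barrelled and is best avoided.
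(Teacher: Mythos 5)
Your proof is correct. There is nothing in the paper to compare it against: the statement is quoted as background from Kelley--Namioka \cite[Corollary 22.3]{LTS} and no proof is given there. Your route --- metrizable $\Rightarrow$ bornological $\Rightarrow$ Mackey --- is the standard textbook decomposition, and both halves are sound: the contradiction argument producing the bounded sequence $(x_n)=(n\,y_n)$ with $y_n\in\tfrac1n U_n\setminus A$ consumes metrizability exactly where you say it does, and the second half rests legitimately on Mackey's theorem that all locally convex topologies compatible with a given duality have the same bounded sets. Your closing caveat is also well placed: the tempting direct route via equicontinuity of $\sigma(E',E)$-compact absolutely convex sets would need barrelledness, which incomplete metrizable spaces may lack, so the bornological detour is the right one.
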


 A parallelism between real topological vector spaces and abelian topological groups  can be established if the dualizing object in the framework of abelian topological groups is taken as the circle group $\T$ (or equivalently the quotient group $\R / \Z$). The role of linear forms is played now by characters, and the dual group of a topological abelian group $G$ is now $G^\wedge: = C Hom (G, \T)$. Another group topology on a topological group $G$ will be called compatible if it gives rise to the same dual group $G^\wedge$. These basic concepts permit to study duality in the class of abelian topological groups.

The notion of quasi-convex subsets of a topological abelian group (Vilenkin, \cite{Vil}) paved the way to
formulate the questions underlying the Mackey theory from topological vector spaces to the class of abelian groups. A decade after the introduction of locally quasi-convex groups, Varopoulos tried to extend the Mackey theory to abelian groups, seemingly unaware of the work of Vilenkin. For this goal he focused on locally precompact groups, a subclass  of the class of locally quasi-convex groups. He proved that a metrizable locally precompact topological abelian group carries the finest compatible locally precompact group topology \cite{V}. That is, a metrizable locally precompact group is Mackey in the class of locally precompact groups (or $\Lp$-Mackey, in terms of Definition \ref{Def-G-Mackey}).

In 1999 in \cite{CMPT} a framework was established to generalize the Mackey theory for locally convex spaces to the larger class of locally quasi-convex groups.
%
%when a  framework was established to deal in a natural way with some aspects of duality related to the  Mackey theory  in topological groups. The main reason   to claim this naturality in  \cite{CMPT}   is the fact  locally convex spaces constitute a subclass of the class of locally quasi-convex groups.
Although a topological vector space is locally convex if and only if it is locally quasi-convex as a topological group \cite[2.4]{B}, there are some obstructions to extend the Mackey-Arens theorem to the class of locally quasi-convex groups, so the Mackey problem is stated as follows in \cite{CMPT}:

\begin{problem}{\rm \cite{CMPT}}
Let $(G,\tau)$ be a locally quasi-convex topological group. Let $\mathcal{C}(G,\tau)$ denote the family of all locally quasi-convex group topologies which are compatible with $\tau$.
\begin{enumerate}
\item Does there exist a top element  in the family $\mathcal{C}(G,\tau)$?
\end{enumerate}
If it exists, we denote it by $\mu=\mu(G,\tau)$. Then $(G,\mu)$ will be called a Mackey group and $\mu$ the Mackey topology for $(G,\tau)$.
\end{problem}

In the class $\LQC$ there always exists a bottom element in  $\mathcal{C}(G,\tau)$, namely the weak topology on $G$
induced by $(G,\tau)^\wedge$. So in case  a precompact topology is Mackey (cf. Theorem B (ii)), then it is the
unique compatible topology.\footnote{This statement was contained in the former Corollary B 2.}

This problem is a specialization of a more general one. In order to formulate it we need the following:

\begin{definition}\label{Def-G-Mackey} Let $\G$ be a class of abelian topological groups and let $(G, \tau)$ be a topological group in $\G$. Let $\mathcal{C}_\G(G,\tau)$ denote the family of all $\G$-topologies $\nu$ on $G$ compatible with $\tau$.
We say that $\mu\in \mathcal{C}_\G(G,\tau)$ is the {\em $\G$-Mackey topology} for $(G,G^\wedge)$ (or the {\em Mackey topology for $(G,\tau)$ in $\G$}) if $\nu\leq\mu$ for all $\nu\in \mathcal{C}_\G(G,\tau)$. \\
If $\G$ is the class of locally quasi-convex groups $\LQC$, we will simply say that the topology is Mackey.
\end{definition}

\begin{remark}\label{ExM}
The Mackey topology exists in $\G$ if and only if the supremum of two compatible topologies in $\G$ is again
compatible.\footnote{Do we need  a proof here?\\
Who is asking this ? As far as I remember, this is proved in \cite{CMPT}, maybe not exactly in this form (dd)\\
L: I do not think that we have to give a proof.}
\end{remark}

\begin{problem}
Let $\G$ be a class of topological abelian groups. Does there exist a top element  in the family $\mathcal{C}_\G(G,\tau)$, i.e., does the $\G$-Mackey topology exist ?
\end{problem}

\footnote{L: We could add the following Remark:
According to Barr and Kleisli, the Mackey topology exists in the class of all nuclear groups. The latter class was introduced by Banaszczyk in \cite{B} and contains all locally compact abelian groups and all nuclear vector spaces.}

The problem whether the Mackey topology exists, in the setting of $\LQC$-groups, is so far open. Under some constrains on the starting topological group it does. For instance, in the following classes of topological groups, the Mackey topology exists and it is precisely the original one:\footnote{L I think the last sentence is somehow misleading, since we know almost nothing about the existence of the Mackey topology in LQC; I suggest to write: Although almost nothing is known
about the existence of the Mackey topology in $\LQC$, some particular topological properties are sufficient for a topology to be the Mackey topology.}

\begin{theorem}\cite{CMPT}\label{TheoremCMPT}
Let $(G,\tau)$ be a locally quasi-convex topological group satisfying one of the following conditions:
\begin{enumerate}
 \item $G$ is Baire and separable.
 \item $G$ is \v{C}ech-complete (in particular, if  $G$ is metrizable and complete).
 \item $G$ is pseudocompact.
\end{enumerate}
Then $\tau$ is the Mackey topology for $(G,\tau)$.
\end{theorem}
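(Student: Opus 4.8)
The plan is to reduce the statement to a single ``reverse Ascoli'' theorem about the dual group and then to establish that theorem separately under each of the three hypotheses. Write $H := (G,\tau)^\wedge$. By Definition~\ref{Def-G-Mackey}, to prove that $\tau$ is the Mackey topology it suffices to show that \emph{every} locally quasi-convex topology $\nu$ on $G$ with $(G,\nu)^\wedge = H$ satisfies $\nu \leq \tau$; this simultaneously yields existence and maximality, since $\tau$ is trivially compatible with itself. Because both $\tau$ and $\nu$ are locally quasi-convex, their quasi-convex neighborhoods of $0$ form a base, and any quasi-convex neighborhood $V$ of $0$ coincides with the prepolar $(V\tr)\tl$ of its polar, where $K := V\tr$ is a $\nu$-equicontinuous subset of $H$. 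Hence $\nu \leq \tau$ is equivalent to the assertion that every $\nu$-equicontinuous subset of $H$ is already $\tau$-equicontinuous, and the whole problem is translated from topologies on $G$ into a comparison of equicontinuous families in $H$.

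The second step is to replace equicontinuity by a property that depends only on the dual pair $(G,H)$ and not on the particular compatible topology. Since characters take values in the compact group $\T$, every family of characters is automatically pointwise bounded; by an Ascoli-type argument the polar $V\tr$, being $\sigma(H,G)$-closed and equicontinuous, is compact in the topology $\sigma(H,G)$ of pointwise convergence. Crucially, both $\sigma(H,G)$ and its class of (relatively) compact sets are determined by the algebraic dual pairing $(G,H)$ alone, so they are common to all compatible topologies. Consequently the entire theorem follows once I prove the converse implication: under each of (1)--(3), every $\sigma(H,G)$-compact subset of $H$ is $\tau$-equicontinuous. Granting this, any $\nu$-equicontinuous $K$ is $\sigma(H,G)$-compact, hence $\tau$-equicontinuous, and therefore $\nu \leq \tau$.

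The remaining reverse-Ascoli step is the crux, and here the three hypotheses must be treated by genuinely different arguments. For case~(2), \v{C}ech-completeness (with complete metrizable $G$ as the model), I would run a Banach--Dieudonn\'e/Grothendieck-style argument, exhausting $G$ by the sets furnished by a \v{C}ech-complete structure and using a Baire-category step to locate a neighborhood of $0$ on which all characters of the given compact set are uniformly small. For case~(1), Baire and separable, the Baire category theorem applies directly to a countable $\sigma(H,G)$-dense subfamily to produce a single witnessing neighborhood, in the spirit of a group-theoretic Banach--Steinhaus theorem. For case~(3), pseudocompactness, I would use that a pseudocompact group is $G_\delta$-dense in its compact completion and invoke a Glicksberg--Comfort--Ross-type phenomenon forcing $\sigma(H,G)$-compactness to coincide with equicontinuity. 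The main obstacle throughout is the absence of Hahn--Banach and of any convexity/absorption calculus: one cannot separate points and rescale as in the locally convex setting of Theorem~\ref{MackeyThm}, so each equicontinuity proof must substitute the relevant completeness or category property and rely on careful bookkeeping with the polar $(\cdot)\tr$ and prepolar $(\cdot)\tl$ operations in the group context.
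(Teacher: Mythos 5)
The paper does not prove this theorem: it is quoted from \cite{CMPT}, so there is no in-text proof to compare against. That said, your reduction is exactly the strategy of the cited source. The property you isolate --- every $\sigma(H,G)$-compact subset of $H=(G,\tau)^\wedge$ is $\tau$-equicontinuous --- is what \cite{CMPT} calls being \emph{g-barrelled}, and their proof is precisely (a) a locally quasi-convex g-barrelled group carries the Mackey topology, plus (b) each of the hypotheses (1)--(3) implies g-barrelledness. Your step (a) is complete and correct: for a compatible locally quasi-convex $\nu$, a quasi-convex $\nu$-neighbourhood $V$ satisfies $V=(V\tr)\tl$; the polar $V\tr$ is $\sigma(H,G)$-compact (it is closed in $\T^G$, and its closure consists of continuous characters since any homomorphism mapping a neighbourhood of $0$ into $\T_+$ is continuous); and $\tau$-equicontinuity of $V\tr$ makes $V$ a $\tau$-neighbourhood. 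Since the weak-$*$ compact sets depend only on the dual pair, g-barrelledness of $(G,\tau)$ then gives $\nu\le\tau$.

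Step (b), which you yourself identify as the crux, is asserted rather than proved, and the sketch for case (1) would not run as literally stated. Extracting a countable $\sigma(H,G)$-dense subfamily of the compact set $K$ does not feed the Baire category theorem: Baire needs $G$ to be covered by countably many closed sets, and neither $K\tl$ nor its ``divided'' refinements cover $G$ (there is no absorption in a group, as you note at the end). The working argument uses separability of $G$ to make $(K,\sigma(H,G))$ compact metrizable, hence $C(K,\T)$ separable, and then covers $G$ by the preimages under the evaluation homomorphism $G\to C(K,\T)$ of countably many closed $\eps$-balls; Baire gives one preimage with interior, and a translation trick yields equicontinuity. For case (2), Banach--Dieudonn\'e is not the relevant tool; the standard route applies a Namioka/Troallic joint-continuity theorem to the separately continuous evaluation $G\times K\to\T$ (\v{C}ech-completeness of $G$ and compactness of $K$ produce a point of joint continuity, which translation converts into equicontinuity at $0$) --- your ``Baire-category step'' is the engine of that theorem, so the instinct is right, but the statement actually needed is missing. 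For case (3), Comfort--Ross $G_\delta$-density in the compact completion together with Glicksberg's theorem (weak-$*$ compact subsets of the discrete dual of a compact group are finite) is indeed the standard argument. In summary: correct architecture, complete first half, but the three equicontinuity lemmas still require genuine proofs.
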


The fact that every metrizable and complete locally quasi-convex group is Mackey, points out an analogy between topological groups and linear topological spaces where the counterpart of this result is available (see Theorem \ref{MackeyThm}).

There are metrizable locally quasi-convex groups which are not Mackey. Thus completeness seems to be an essential requirement.
\begin{example}\label{exampleUnboundedDadidi}
The following groups admit a metrizable non-discrete locally quasi-convex topology which is not Mackey. Since they are countable they cannot be complete, for otherwise, by  the Baire Category Theorem they would be discrete:
\begin{itemize}
 \item[(a)]  The group of the integers $\Z$ endowed with any non-discrete  linear topology is metrizable and locally quasi-convex but fails to be Mackey (\cite{AD}).
 \item[(b)]  The Pr\"{u}fer groups $\Zpinfty$ endowed with the topology inherited from $\T$ are metrizable locally-quasi-convex groups which are not Mackey (\cite{DlB}).
 \item[(c)]  The group of rationals $\Q$ endowed with the topology inherited from the real line is a {\em non precompact} metrizable locally quasi-convex group which is not Mackey (\cite{DlB,DE}).
\end{itemize}
\end{example}

\begin{example} The following groups are metrizable locally quasi-convex and Mackey:

Let $m\geq 2$ be a natural number and consider the direct sum of countable many copies of $\Z_m$, $G_m:=\bigoplus_\omega\Z_m$ endowed with the topology inherited from the product $\prod_\omega\Z_m$. Then $G_m$ is a metrizable, locally quasi-convex {\em non-complete} Mackey group (\cite{BTM}).
\end{example}

One can also find examples with a stronger flavor from Functional Analysis, namely connected, metrizable topological groups which are not Mackey. Indeed, the group of all the null-sequences on $\T$,  endowed with the topology inherited from the product $\T^\N$ is a metrizable connected precompact locally quasi-convex group topology which is not Mackey (\cite{DMPT}).

\subsection{Main results.}

Metrizability alone does not ensure that a locally quasi-convex topology on a group is Mackey, unless it is accompanied (or replaced) by an additional topological property (completeness, etc., see Theorem \ref{TheoremCMPT}). This motivates to investigate which additional properties of {\em algebraic} nature, that when imposed along with metrizability   guarantees that a locally quasi-convex topology group is Mackey. %We propose a special name for the (class of) abstract abelian %groups with this property:

\begin{definition} We say that an abelian group $G$ satisfies the {\em Varopoulos Paradigm}, if every metrizable locally quasi-convex topology on $G$ must be a Mackey topology. We briefly denote this by $G \in \VP$.
\end{definition}

According to the main theorem of this paper, the groups satisfying the Varopoulos Paradigm are exactly the bounded ones:

\vspace{0.2cm}
\noindent\textbf{Theorem A: }{\em Let $G$ be an abelian group. Then, the following assertions are equivalent:

\begin{itemize}
 \item[$(i)$] $G \in \VP$, i.e., every metrizable locally quasi-convex group topology on $G$ is Mackey.
 \item[$(ii)$] $G$ is bounded.
\end{itemize}
}
\vspace{0.2cm}

For the proof of Theorem A, we will consider separately the cases of bounded and unbounded groups. In each case we will get one of the implications of Theorem A. In fact, Theorem B is a stronger version of the implication $(ii)\Rightarrow (i)$  in Theorem A, while Theorem C is simply the implication $(i)\Rightarrow (ii)$.

The main result in the case of bounded groups is:

\vspace{0.2cm}
\noindent\textbf{Theorem B: }{\em Let $G$ be a bounded abelian group and $\tau$ a locally quasi-convex topology on $G$.
\begin{itemize}
\item[$(i)$]  If $\tau$ is metrizable, then it is the Mackey topology for $G$.
\item[$(ii)$] If $\abs{(G,\tau)^\wedge}<\c$ then $\tau$ is precompact and the Mackey topology for $G$.
\end{itemize}
Then $(G,\tau)$ is Mackey.}
\vspace{0.2cm}

\begin{remark}
In \cite[Example 4]{BTM} a weaker version of Theorem B (ii) was shown, namely: Every bounded precompact topological group $(G,\tau)$ with $\abs{(G,\tau)^\wedge}<\c$ is Mackey.

In \cite[Prop.8.57]{LL} Theorem B (ii) was proved by other methods.

\end{remark}

 Immediately  from Theorem B (ii) we get:

\begin{corollary}\label{corollaryLemma2.16}
Let $(G,\tau)$ be a %metrizable
locally quasi-convex group topology on a bounded group which is not precompact. Then $\abs{(G,\tau)^\wedge}\geq\c$.\footnote{In the previous version $G$ was assumed to be metrizable, what is not necessary.}
\end{corollary}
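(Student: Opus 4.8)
The plan is to obtain this statement immediately as the contrapositive of Theorem B~(ii). The standing hypotheses of the corollary are exactly those of Theorem B: the group $G$ is bounded and $\tau$ is a locally quasi-convex group topology on $G$. Under these hypotheses Theorem B~(ii) asserts that the strict inequality $\abs{(G,\tau)^\wedge}<\c$ forces $\tau$ to be precompact (and, moreover, Mackey, a conclusion we shall not need here).

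Concretely, I would first isolate from Theorem B~(ii) the single implication
$\abs{(G,\tau)^\wedge}<\c \;\imp\; \tau \text{ precompact}$,
discarding the additional Mackey assertion. Contraposing this implication gives
$\tau \text{ not precompact} \;\imp\; \abs{(G,\tau)^\wedge}\geq\c$,
which is precisely the claim of the corollary. No further argument is required: the hypothesis ``$\tau$ is not precompact'' is exactly the negation of the conclusion of the quoted implication, and the desired conclusion ``$\abs{(G,\tau)^\wedge}\geq\c$'' is exactly the negation of its hypothesis. The only points to verify are that boundedness of $G$ is preserved (it is, being part of the corollary's hypothesis) and that non-precompactness is invoked solely through Theorem~B, so that the latter applies verbatim.

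There is thus no genuine obstacle at the level of the corollary itself; all the substance, in particular the appearance of the threshold cardinal $\c$, lives in the proof of Theorem~B~(ii). The corollary is a purely formal consequence, which is why it can be stated as following \emph{immediately} from that theorem.
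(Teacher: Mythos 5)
Your proposal is correct and is exactly the paper's own argument: the paper derives this corollary ``immediately from Theorem B~(ii)'' as the contrapositive of the implication $\abs{(G,\tau)^\wedge}<\c \imp \tau$ precompact, just as you do. Nothing further is needed.
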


\begin{examples}\label{ex} The following examples shall show that none of the hypotheses of Theorem B can be omitted:
\begin{enumerate}
%There exist precompact groups $(G,\tau)$ with $|(G,\tau)^\wedge|\ge  \c$:  Indeed,
\item[(a)] The $p$-adic topology on $\Z$ is locally quasi--convex (even linear), metrizable, and precompact, but it is not Mackey (\cite{AD}).
Its dual group has cardinality $\c$.

\item[(b)]   Consider a $D$-sequence $\b=(b_n)$ satisfying $\frac{b_{n+1}}{b_n}\rightarrow\infty$ as in \cite{AD}. Let $\taub$ be the topology on $\Z$ of uniform convergence   on the set $ \left\{\frac{1}{b_n}+\Z\right\}\subset \T$. As proved in \cite{AD},  $\taub$ is metrizable and locally quasi-convex and satisfies   $\abs{(\Z,\taub)^\wedge}<\cont$ but it is not precompact.

%\item[(b)] For the $D$--sequence ... the topology $\taub$ of uniform convergence is metrizable, but not Mackey.
%(Reference to Daniel's dissertation)\footnote{L: Daniel, please include here a suitable example.}
\item[(c)] The Prüfer group $\Z(p^\infty)$ endowed with the topology from $\T$ is a prcompact, metrizable torsion group with countable dual group which is not Mackey according to \cite{DlB}. This shows that we cannot replace "bounded" by "torsion groups".
\item[(d)] The direct sum of $\c$-many copies of $\Z_m$ endowed with the topology inherited from the product $\Z_m^\R$ is precompact (and linear) and bounded, but its dual group has cardinality $\c$.\footnote{L: Do we know whether this group is Mackey? If so, it would be good to give an argument here.} This shows that the first implication of Theorem B (ii) cannot be inverted.
\item[(e)]    The group $G=\bigoplus_\omega\Z_4$ endowed with the discrete topology is bounded, locally quasi-convex  with $\abs{G^\wedge}=\cont$, but it is not precompact.
\item[(f)] Let $G$ be an infinite (bounded) group. Let $\delta^+$ denote the Bohr topology on $G$, i.e. the weak topology on $G$ induced by all homomorphisms $G\to\T$. Then $(G,\delta^+)$ is  (bounded,) precompact,
    $|(G,\delta^+)|=2^{|G|}\ge \c$ and it is not Mackey, as the discrete topology is Mackey topology for
    $(G,(G,\delta^+)^\wedge)$.
\end{enumerate}
\end{examples}

%The  boundedness  condition is necessary in Theorem B :
 %Lemma \ref{LemmaBddYDualContableEntoncesPrecompacta}

%\begin{remark} The boundedness condition in Theorem B is necessary. Even in the realm of precompact groups, there are examples of groups satisfying $(i)$ or $(ii)$ which are not Mackey. Indeed, a precompact group of weight $<\cont$ need not be a Mackey group. For instance, the linear topologies on $\Z$ are metrizable  precompact  and never Mackey (see Example \ref{exampleUnboundedDadidi}(a)). A milder condition (torsion instead of bounded) is not sufficient either. The groups $\Zpinfty$ are metrizable precompact and torsion, but they are not Mackey groups (see Example \ref{exampleUnboundedDadidi}(b)).
%\end{remark}

From Theorem B (and some extra results from \cite{ADM1}), we compute the cardinality of the family $\mathcal{C}(G,\tau)$ for a metrizable bounded group, as expressed next:

\vspace{0.2cm}
\noindent\textbf{Corollary B1: }Let $(G,\tau)$ be a metrizable, locally quasi-convex and bounded group. Then:
\begin{enumerate}
\item $\abs{\mathcal{C}(G,\tau)}=1$ if and only if $\tau$ is precompact. In this case, $(G,\tau)$ is precompact and Mackey.
\item Otherwise $\abs{\mathcal{C}(G,\tau)}\geq 2^\c$. In this case, $\tau$ is the only metrizable topology in $\mathcal{C}(G,\tau)$.
\end{enumerate}

Corollary B1 solves Conjecture 7.6 in \cite{ADM1} for bounded groups.

%Item $(ii)$ in Theorem B, also allows us to compute the cardinality of the family $\mathcal{C}(G,\tau)$ if $(G,\tau)$ is %bounded and $\abs{(G,\tau)^\wedge}<\c$, as follows:

%\vspace{0.2cm}
%\noindent\textbf{Corollary B2: }Let $(G,\tau)$ be a locally quasi-convex topology on a bounded group such that %$\abs{(G,\tau)^\wedge}<\c$. Then $\abs{\mathcal{C}(G,\tau)}=1$.

%\vspace{0.2cm}
% The corollary will automatically follow from the fact that locally quasi-convex topologies on bounded groups satisfying %$\abs{(G,\tau)^\wedge}<\c$ are precompact.

%In Theorem B, there is a connection between metrizability and Mackeyness on precompact topologies. However, this %connection is not automatically extended to bounded groups. Indeed, there are bounded Mackey groups which are not %metrizable:

%\vspace{0.2cm}
%\noindent \textbf{Corollary B3: }Every bounded precompact topological group $(G,\tau)$ with $\abs{(G,\tau)^\wedge}<\c$ is %Mackey.

\vspace{0.2cm}

%  while corollaries B2 and B3 can be found also in \cite[Prop.8.57]{LL}. In all these cases the arguments are different %from ours.

 Let $G$ be a countable\footnote{better: countably infinite} group. Under Martin Axiom, there exist $2^\c$ precompact topologies of weight $\c$ on $G$. Indeed, there exist $2^{2^{\abs{G}} }=2^\c $ precompact -pairwise different- topologies on $G$. We compute the cardinality of precompact topologies with weight $<\c$. Consider a cardinal\footnote{L: Shouldn't we assume $\kappa$ to be infinite?} $\kappa<\c$. Since every precompact topology of weight $\kappa$ is determined exactly by a subgroup of $G^*$ of cardinality $\kappa$, there exist $\abs{G^*}^\kappa$ precompact topologies of weight $\kappa$. Now, $\abs{G^*}=\c$, hence $\abs{G^*}^\kappa=\c^\kappa=(2^\omega)^\kappa=2^\kappa\stackrel{M.A.}{=}\c$ . Consequently, there exist $2^\c$ precompact topologies of weight $\c$ on $G$.

The following question arises:
\begin{question}
Let $G$ be a countable group. How many of the precompact topologies of weight $\c$ on $G$ are Mackey? How many of them fail to be Mackey?
\end{question}

Although we don't know the answer to this question, we give the following partial answer:

\vspace{0.2cm}
\noindent\textbf{Corollary B 2: } Let $G$ be a bounded countably infinite group. Then, there exist $\c$ precompact topologies of weight $\c$ which are not Mackey.
\vspace{0.2cm}

\begin{proof}
 Since $G$ is bounded and countably infinite, there exists a natural number $L$ such that $G=\bigoplus_{n<\omega} \Z_{m_n}$, where $m_n<L$ for all $n\in\N$. Hence, we can write $G=G_1\times G_2$ where both $G_1$ and $G_2$ are infinite. Since $G_1$ is infinite, there exists a family of $\c$ precompact topologies $\{\tau_i\}$ on $G_1$ whose weight is $<\c$. \footnote{L: Don't we need $=\omega$ in order to obtain that $(G_1,\tau_i)$ is metrizable?}  Consider the discrete topology $d$ in $G_2$ and in $G$ the product topology, namely $\mathcal{T}_i=\tau_i\times d$. The topology $\mathcal{T}_i$ is metrizable (hence, by Theorem B, Mackey) but not precompact (for each $i$). In addition, $\abs{(G,\mathcal{T}_i)^\wedge}=\c$. Hence the topologies $\mathcal{T}_i^+$ are precompact of weight $\c$ and not Mackey.
\end{proof}

\begin{question}
Does there exist a precompact bounded group of weight $\c$ which is Mackey?\footnote{Here again my question:
is $\bigoplus \Z_m$ with the topology induced by  $\prod \Z_m$ Mackey?}
\end{question}

\vspace{0.2cm}

In \S \ref{sectionBounded}, we prove Theorem B and Corollary B1.

\begin{remark}
We shall show in another paper that in the class of linearly topologized groups the Mackey topology exists and that
every metrizable linear topology is the Mackey topology in this class.
\end{remark}

 %Since the proof is quite long and articulated, we propose  here a diagram showing the implications between all needed results (propositions, lemmas and corollaries) leading to the final proof:
%
%$$\xymatrix{
%  & \mbox{P. } \ref{proposition1Dadidi}\ar[r] & \mbox{L. }\ref{lemma5Dadidi}\ar[rd]  & & &  &\\
% \mbox{L. }\ref{lemma3.0Dadidi}\ar[d] & \mbox{L. } \ref{lemma2Dadidi}\ar[ur]& &\mbox{P. }\ref{theorem6Dadidi}\ar[ddr] & \mbox{L. }\ref{linear+dualpequeno=>precompacto}\ar[d] & \mbox{C. }\ref{corollaryLemma2.16}  &\\
% \mbox{L. }\ref{lemma3Dadidi}\ar[rr]\ar[rd] & &\mbox{L. }\ref{lemma4Dadidi}\ar[ur] & &\mbox{L. }\ref{LemmaBddYDualContableEntoncesPrecompacta}\ar[dr]\ar[ur] & &
%%, B2, B3, B4}
%   \\
%  \mbox{C. } \ref{corollary3.2Dadidi}\ar[u]&\mbox{C. }\ref{corollary3.1Dadidi}\ar[ur] & &\mbox{C. }\mbox{P. }\ref{proposition7Dadidi}
%  %\ar[u]\ar[ru] \ref{corollary7.1Dadidi}
%  \ar[r] & \mbox{C. }\ref{corollary7.2Dadidi}\ar[r]& \mbox{Theorem B }\ar[r] & \mbox{Corollary  B1}
% \\
%  \mbox{L. } \ref{lemma3.1Dadidi}\ar[u] & & & &   & &
%}$$

For unbounded groups, the picture is completely different. In fact, on any unbounded topological group we can build a metrizable locally quasi-convex group topology which is not Mackey. This is Theorem C, which is the main result in this direction:

\vspace{0.2cm}
\noindent\textbf{Theorem C: }{\em If $G$ is an unbounded group, then $G \not \in \VP$, i.e., there exists a metrizable locally quasi-convex topology on $G$ which is not Mackey.}
\vspace{0.2cm}

The proof of Theorem C, given in \S 3, is based in the following facts:
\begin{itemize}
\item $\Z,\Zpinfty$ and $\Q$ have a metrizable locally quasi-convex non-Mackey topology (see Example \ref{exampleUnboundedDadidi}).
\item Every unbounded group $G$ has a subgroup of the form $\Z,\Zpinfty,\Q$ or $\bigoplus_{n=1}^\infty\Z_{m_n}$,  where $p$ is a prime and
$(m_n)$ is a sequence with $\lim m_n = \infty$ \cite{Fuc}.
\item Open subgroups of Mackey groups are Mackey (Lemma \ref{theorem11Dadidi}).

\item For any sequence $(m_n)$ with $m_n\rightarrow\infty$ the product topology of the group
$\bigoplus_{n=1}^\infty\Z_{m_n}$
%(where $(m_n)$ is a non-decreasing sequence satisfying that $m_n\rightarrow\infty$) has
is metrizable and non-Mackey (Proposition \ref{theorem12Dadidi}), witnessing $\bigoplus_{n=1}^\infty\Z_{m_n}\not \in \VP$.
%of the topology which is finer than the topology from the product is Lemma \ref{lemma12.1Dadidi}.
\end{itemize}

 As a by-product, the construction in Proposition \ref{theorem12Dadidi} (of a compatible locally quasi-convex  topology finer than the product topology) can be used to provide $\cont$ many pairwise non-isomorphic metrizable
locally precompact linear group topologies on the group $\Q/\Z$ and all its subgroups of infinite rank (compare with Example \ref{exampleUnboundedDadidi}(c), where the single topology with similar properties is not linear).

%%%%%%%%%%%%%%%%%%%%%%%%%%
%Here we have a graphic with the results needed for the proof:
%$$\xymatrix{
%                                       &\mbox{L. }\ref{lemma11.1Dadidi}\ar[dr] &              \\
%                                       &                                       &\mbox{Th. C}  \\
% \mbox{L. }\ref{lemma12.1Dadidi}\ar[r] &\mbox{L. }\ref{theorem12Dadidi}\ar[ru] &
%     }$$

\subsection{Notation and terminology.}

The symbols $\N,\Z,\Q,\C,\Zpinfty,\T$ will stand for the natural numbers, the integer numbers, the rational numbers, the complex numbers, the Pr\"ufer groups and the unit circle of the complex plane respectively. Using the structure of $\C$, in particular, the real part function $Re(z)$ for $z\in\C$, we let $\T_+=\T\cap\{z\in\C:Re(z)\geq 0\}$ (although we frequently identify $\T$  with the quotient group $\R / \Z$, if  the additive notation is more convenient).
The group $\Z_m$ denotes the cyclic group of $m$ elements. For an abelian group $G$, we denote by $G_d$ the group endowed with its discrete topology
and $G^* = G_d^\wedge = \Hom (G, \T)$.  For a cardinal $\kappa$ we denote by $G^{(\kappa)}$ the direct sum $\bigoplus_\kappa G$ of $\kappa$ copies of $G$.

 For a topological group $(G,\tau)$ its dual group is the group of continuous homomorphisms $\CHom(G,\T)$, and will be denoted by $(G,\tau)^\wedge$ or $G^\wedge$ if the topology is clear. For an abelian group $G$, we say that $\tau$ and $\nu$ are compatible if $(G,\tau)^\wedge=(G,\nu)^\wedge$. For a topological group $(G,\tau)$, we denote by $\mathcal{C}(G,\tau)$ the family of all locally quasi-convex topologies on $G$ which are compatible with $\tau$. A subgroup $H\leq G$ of a topological group is dually closed if for any $x\notin H$, there exists $\phi\in G^\wedge$ satisfying that $\phi(H)=0$ and $\phi(x)\neq 0$.

For an abelian group $G$ and a subgroup $H\leq \Hom(G,\T)$ we shall denote by $\sigma(G,H)$ the weak topology on $G$ induced by the elements of $H$. When the initial group is a topological group, then $G^\wedge\leq \Hom(G,\T)$ is a subgroup and we write $\tau^+$ instead of $\sigma(G,G^\wedge)$. The topology $\tau^+$ is usually called the Bohr topology of $(G,\tau)$.  Obviously, $H$ is dually closed if it is $\tau^+$-closed. If $G$ carries the discrete topology, the corresponding Bohr topology has remarkable properties (see for example \cite{VD1990}).

A group $G$ is said to be bounded if there exists $m\in\N$ satisfying that $mG=0$. In this case, we will call the exponent of $G$ to the minimum positive integer satisfying this condition. For a prime number $p$, we say that a group $G$ is a $p$-group if for every $x\in G$ there exists a natural number $n$ satisfying that $p^nx=0$.

\begin{definition}
Let $(G,\tau)$ be a topological group. We say that $\tau$ is {\em linear} if it has a neighborhood basis consisting in subgroups. In this case, we say that $(G,\tau)$ is {\em linearly topologized}.
\end{definition}

%As mentioned above, the most natural class of topological groups to consider this problem is the class of locally quasi-convex groups:

\begin{definition}
Let $(G,\tau)$ be a topological group. We say that a subset $M\subset G$ is {\em quasi-convex} if for every $x\in G\setminus M$, there exists $\phi\in G^\wedge$ satisfying that $\phi(M)\subset\T_+$ and $\phi(x)\notin\T_+$. If  $\tau$ has a neighborhood basis consisting in quasi-convex sets, we say that $\tau$ is
{\em locally quasi-convex}.
\end{definition}

Further, we set for a subset $M\subseteq G\quad M^\triangleright=\{\chi\in G^\wedge:\ \chi(M)\subseteq\T_+\}$ and for
a subset $N\subseteq G^\wedge\quad N^\triangleleft=\{x\in G:\ \chi(x)\in \T_+\ \forall \chi\in N\}$.

For the reader's convenience, we include a proof of the following result, which can be found also in \cite[Proposition 2.1]{AG}.

\begin{proposition}\label{lqc+bdd=linear}\label{proposition7Dadidi}
Let $(G,\tau)$ be a locally quasi-convex bounded group. Then $\tau$ is linear.
\end{proposition}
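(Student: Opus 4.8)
The plan is to show directly that every quasi-convex neighborhood $U$ of $0$ contains an \emph{open subgroup}; since $\tau$ is locally quasi-convex these neighborhoods form a basis, so the open subgroups will form a basis as well, giving linearity. The whole argument rests on one elementary feature of the dualizing object: \emph{the arc $\T_+$ contains no non-trivial subgroup of $\T$}. Indeed, a non-trivial subgroup of $\T$ is either dense (hence not contained in the proper closed set $\T_+$) or a finite cyclic group $\Z(d)$ with $d\ge 2$, and the latter always contains an element near $1/2$, i.e. outside $\T_+$. I would isolate this as a preliminary remark, since it is the only place where the geometry of $\T_+$ really enters.

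Now fix a quasi-convex neighborhood $U$ of $0$, so that $U=(U\tr)\tl=\bigcap_{\chi\in U\tr}\{x:\chi(x)\in\T_+\}$, and set $V=\bigcap_{\chi\in U\tr}\ker\chi$. Being an intersection of subgroups, $V$ is a subgroup, and since $\ker\chi\sq\{x:\chi(x)\in\T_+\}$ we get $V\sq U$ at once. The heart of the matter is to show that $V$ is in fact a \emph{neighborhood} of $0$, for a subgroup which is a neighborhood of the identity is automatically open. This is where boundedness is used: let $m\in\N$ with $mG=0$ and put
\[
U'=\{x\in G:\ kx\in U\text{ for all }k\in\Z\}=\bigcap_{k=0}^{m-1}\mu_k^{-1}(U),
\]
where $\mu_k\colon G\to G$ denotes multiplication by $k$. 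Each $\mu_k$ is continuous and fixes $0$, so each $\mu_k^{-1}(U)$ is a neighborhood of $0$; as the intersection is \emph{finite} (this is exactly what $mG=0$ buys us), $U'$ is again a neighborhood of $0$.

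It then remains to check the inclusion $U'\sq V$. For $x\in U'$ and $\chi\in U\tr$ one has $k\,\chi(x)=\chi(kx)\in\T_+$ for every $k$, so the cyclic subgroup $\langle\chi(x)\rangle$ of $\T$ is contained in $\T_+$; by the preliminary remark this forces $\chi(x)=0$, i.e. $x\in\ker\chi$. Since $\chi\in U\tr$ was arbitrary, $x\in V$. Consequently $U'\sq V\sq U$, so $V$ contains a neighborhood of $0$ and is therefore an open subgroup with $V\sq U$, proving that $\tau$ has a basis of open subgroups.

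The only genuine obstacle is the openness of $V$: the naive subgroup $V=\bigcap_{\chi\in U\tr}\ker\chi$ need not be open, being an infinite intersection of open subgroups, and the candidate $U'$ need not be a subgroup. Boundedness is precisely the hypothesis that reconciles the two, replacing the infinitely many conditions $kx\in U$ $(k\in\Z)$ by the finitely many $k=0,\dots,m-1$, and thereby sandwiching a genuine neighborhood $U'$ between $V$ and itself.
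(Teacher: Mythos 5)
Your proof is correct, and it lands on the same open subgroup as the paper's proof, namely $(U^{\triangleright})^{\perp}=\bigcap_{\chi\in U^{\triangleright}}\ker\chi$, but it gets there by a genuinely different mechanism. The paper first invokes the equicontinuity of the polar $U^{\triangleright}$ to produce a neighborhood $W$ of $0$ with $\chi(W)\subseteq \left(-\tfrac1m,\tfrac1m\right)+\Z$ for all $\chi\in U^{\triangleright}$, and then uses $mG=0$ to observe that each $\chi(G)$ lies in the finite subgroup $\{\tfrac km+\Z\}$ of $\T$, whence $\chi(W)=\{0\}$ and $W\subseteq (U^{\triangleright})^{\perp}\subseteq (U^{\triangleright})^{\triangleleft}=U$. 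You bypass equicontinuity entirely: boundedness converts the a priori infinite family of conditions $kx\in U$ $(k\in\Z)$ into the finite intersection $\bigcap_{k=0}^{m-1}\mu_k^{-1}(U)$, which is a neighborhood of $0$ by continuity of the multiplication maps, and then the elementary fact that $\T_+$ contains no non-trivial subgroup of $\T$ forces $\chi(x)=0$ for every $x$ in that neighborhood and every $\chi\in U^{\triangleright}$. Your route is more self-contained, since its only external input is the easily verified no-nontrivial-subgroup property of $\T_+$, whereas the paper leans on the standard but not-quite-free lemma that polars of neighborhoods are equicontinuous; the paper's route is shorter and gives a sharper picture (one small $W$ is annihilated by all of $U^{\triangleright}$ at once because each $\chi(G)$ is a finite subgroup of $\T$). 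Both proofs use quasi-convexity only through the bipolar identity $U=(U^{\triangleright})^{\triangleleft}$, which yields $(U^{\triangleright})^{\perp}\subseteq U$, and both conclude by noting that a subgroup which is a neighborhood of $0$ is open.
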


\begin{proof} Suppose that $mG=0$. 	
Let $U$ be a quasi-convex neighborhood of $0$ in $G$. It is well known that $U^\triangleright$ is an equicontinuous
subset of $G^\wedge$. Hence there exists a neighborhood $W$ of $0$ in $G$ such that $\chi(W)\subseteq ]-\frac{1}{ m},
\frac{1}{ m}[+\Z$ for all $\chi\in U^\triangleright$. Since $m\chi\equiv 0$, we obtain $\chi(W)\subseteq \chi(G)\subseteq
\{\frac{k}{m}+\Z,\ k\in\Z\}$. Combined with the above inclusion, this yields $\chi(W)=\{0\}$ for every $\chi\in U^\triangleright$.
This means $W\subseteq (U^\triangleright)^\bot$. In particular, the subgroup $(U^\triangleright)^\bot\subseteq
(U^\triangleright)^\triangleleft=U$ is open.

% Let $U\subseteq\T$ be a neighborhood of $0$ such that
%$U\cap \Z_m=\{0\}$. As
%
% as well, so we can assume without loss of generality that $G^* = \Hom (G, \Z_m)$.
%
%Since $\tau$ is locally quasi-convex, it is the topology of uniform convergence on a family $\mathcal{K}\subset\mathcal{P}(G^*)$ (see for example \cite{DE}).
%For  $K\in \mathcal{K}$ and $n\geq m$ let
%$$
%V_{n,K}:=\left\{g\in G:x(g)\in\left[-\frac{1}{4n},\frac{1}{4n}\right]+\Z \mbox{ for all }x\in K\right\}.
%$$
%Then the neighborhoods $\left\{V_{n,K}:K\in\mathcal{K},n\geq m\right\}$ form a neighborhood basis of $\tau$. Since $x(g)\in\Z_m$ for all $g\in G,x\in G^*$, it follows that $V_{n,K}
%%\left\{g\in G:x(g)=0+\Z \mbox{ for all }x\in K\right\}
%=Ann(K).$
%Hence, the neighborhood basis consists in subgroups and $\tau$ is linear.
\end{proof}

\begin{corollary}\label{corollary7.1Dadidi}\label{corollary7.2Dadidi}
The notions $\Lin$-Mackey and Mackey are equivalent for bounded groups. %In particular, every bounded $p$-group with a %metrizable locally quasi-convex topology $(G,\tau)$ is Mackey.
\end{corollary}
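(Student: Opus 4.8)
The plan is to reduce the statement to the observation that, on a bounded group, the locally quasi-convex topologies and the linear topologies are literally the same topologies, after which the equality of the two Mackey notions becomes a formal consequence of their definitions. One inclusion is already at hand: Proposition \ref{lqc+bdd=linear} tells us that every locally quasi-convex topology on a bounded group $G$ is linear. So the real content is the converse inclusion, which in fact holds for \emph{every} abelian group: every linear topology is locally quasi-convex.

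To prove that converse, I would show that an open subgroup $H$ of any topological abelian group $G$ is quasi-convex; a neighbourhood basis of open subgroups then becomes a neighbourhood basis of quasi-convex sets, which is exactly what local quasi-convexity requires. Fix $x\in G\setminus H$. First note that since $\phi(H)$ is a subgroup of $\T$ for any $\phi\in G^\wedge$, and since the only subgroup of $\T$ contained in $\T_+$ is $\{0\}$, the condition $\phi(H)\subseteq\T_+$ is equivalent to $\phi(H)=0$, i.e. to $\phi$ factoring through the (discrete) quotient $G/H$. As $x\notin H$, the element $x+H$ is nonzero in $G/H$, and since $\T$ is a cogenerator in the category of abelian groups (equivalently, $\Hom(-,\T)$ separates points) there is a character $\psi$ of $G/H$ with $\psi(x+H)\neq 0$. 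The cyclic subgroup generated by $\psi(x+H)$ is a nontrivial subgroup of $\T$, hence not contained in $\T_+$, so some integer multiple satisfies $n\,\psi(x+H)\notin\T_+$. Composing $n\psi$ with the (continuous) quotient map $G\to G/H$ yields $\phi\in G^\wedge$ with $\phi(H)=0\subseteq\T_+$ and $\phi(x)\notin\T_+$, witnessing that $H$ is quasi-convex.

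Finally I would assemble the two inclusions. For a bounded group $G$ the two families of topologies coincide, so for any such $(G,\tau)$ the families of compatible topologies agree, $\mathcal{C}_{\Lin}(G,\tau)=\mathcal{C}_{\LQC}(G,\tau)=\mathcal{C}(G,\tau)$. Since, by Definition \ref{Def-G-Mackey}, the $\G$-Mackey topology is the greatest element of $\mathcal{C}_\G(G,\tau)$, and the two families have the same elements, a topology is the top element of one exactly when it is the top element of the other. Hence $\tau$ is $\Lin$-Mackey if and only if it is Mackey, which is the assertion. The only genuinely non-formal step is the quasi-convexity of open subgroups; everything else is bookkeeping with the definition of the $\G$-Mackey topology, and the boundedness hypothesis enters solely through Proposition \ref{lqc+bdd=linear}, which supplies the inclusion (locally quasi-convex $\Rightarrow$ linear) that fails in general.
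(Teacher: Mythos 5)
Your proof is correct and follows the same route the paper intends: the corollary is stated without proof as an immediate consequence of Proposition \ref{lqc+bdd=linear}, the point being precisely that on a bounded group the locally quasi-convex topologies and the linear topologies coincide, so that $\mathcal{C}_{\Lin}(G,\tau)=\mathcal{C}(G,\tau)$ and the top elements of the two families agree. The only extra content you supply is the standard fact that open subgroups are quasi-convex (hence linear $\Rightarrow$ locally quasi-convex), which the paper leaves implicit; your argument for it is sound.
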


We will consider the following notation for some classes of topological groups:

\begin{itemize}
%\item $\MAP$, $\MAP$-Mackey -- for the class of MAP groups \footnote{Not used}
\item $\Lin $ (resp., $\Lin$-Mackey) -- for the class of linearly topologized Hausdorff groups.
\item $\Lp $  (resp., $\Lp$-Mackey) -- for the class of locally precompact groups.
\item $\LQC $  (resp., $\LQC$-Mackey) -- for the class of locally quasi convex Hausdorff groups.
%For short, we will write Mackey group for $\LQC$-Mackey group.
\end{itemize}

Since locally quasi-convex groups generalize locally convex spaces, the most natural class $\G$ to study Mackey topologies is the class of locally quasi-convex groups. Hence, we write simply Mackey for $\LQC$-Mackey.

%We consider the following subsets of the poset $\scal($$G)$ of all subgroups of a topological abelian group $G$:
%
%\begin{itemize}
%
%\item  $\ocal$$(G)$ -- of all open subgroups of $G$;
%\item  $\ccal$$ (G)$ -- of all closed subgroups of $G$;
%\item  $\ccal$$_{dua} (G)$ -- of all dually closed subgroups of $G$;
%\item  $\ccal$$_{fi}(G)$ -- of all closed subgroups of finite index.
%\item  $\ccal$$_*(G)$ -- of all closed subgroups $H$ of $G$ such that all larger $N\in \scal(G)$ are still closed;
%\item  $\dcal$$(G)$ -- of all dense subgroups of $G$.
%\end{itemize}
%
%The poset $\dcal$$(G)$ has been intensively studied in \cite{CoD1,CoD2,D_lattice}.
%
%In the sequel we denote by $\mathcal D en$ the class of topological abelian groups $G$ with $\dcal(G)\ne \{G\}$, i.e., having proper dense subgroups.
%
\section{The bounded case: proofs of Theorem B and Corollary B1}\label{sectionBounded}

\begin{proposition}\label{open/closed}\label{proposition1Dadidi}
Let $G$ be an abelian group. If $\tau , \tau'$ are compatible topologies on $G$, they share the same dually closed subgroups. %That is, $\ccal$$_{dua} (G,\tau)=\ccal$$_{dua} (G,\tau')=\ccal$$_{dua} (G,\tau^+)$.
\end{proposition}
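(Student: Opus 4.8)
The plan is to observe that the property of being \emph{dually closed} is, by its very definition, an invariant of the pair $(G,G^\wedge)$: it makes no reference to the topology except through the dual group it determines. So once the definitions are unwound, the statement should collapse to the defining equality of compatibility, and no real work will be needed.

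First I would recall that, for a topology $\tau$, a subgroup $H\leq G$ is dually closed precisely when for every $x\in G\setminus H$ there is a character $\phi\in (G,\tau)^\wedge$ with $\phi(H)=\{0\}$ and $\phi(x)\neq 0$. Introducing the annihilator $H^{\perp}:=\{\phi\in (G,\tau)^\wedge:\ \phi(H)=\{0\}\}$, this condition says exactly that $H=\{x\in G:\ \phi(x)=0\ \text{for all}\ \phi\in H^{\perp}\}$, the inclusion $\subseteq$ being automatic. Crucially, both the formation of $H^{\perp}$ and this ``double annihilator'' equality depend only on \emph{which} homomorphisms $G\to\T$ lie in the dual group, and not on $\tau$ in any other way.

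Second I would invoke compatibility: by definition, $\tau$ and $\tau'$ being compatible means $(G,\tau)^\wedge=(G,\tau')^\wedge$ as subgroups of $\Hom(G,\T)$. Hence, for any fixed subgroup $H\leq G$, the set $H^{\perp}$ computed with respect to $\tau$ coincides with the one computed with respect to $\tau'$, and the separation condition displayed above is literally the same sentence in either case. Therefore $H$ is dually closed for $\tau$ if and only if it is dually closed for $\tau'$, which is the claim.

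There is no genuine obstacle here; the only point requiring a modicum of care is to phrase dual closedness purely through $G^\wedge$ at the outset, after which the conclusion is immediate from the equality of dual groups. Alternatively, one may note that a subgroup is dually closed exactly when it is closed in the Bohr topology $\tau^{+}=\sigma(G,G^\wedge)$, and that $\tau^{+}=(\tau')^{+}$ for compatible topologies since the Bohr topology is built solely from the common dual group; this yields the same conclusion along an equivalent route.
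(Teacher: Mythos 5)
Your proof is correct and essentially coincides with the paper's: the paper argues via the two facts that compatible topologies share the weak topology $\sigma(G,G^\wedge)$ and that a subgroup is dually closed iff it is $\sigma(G,G^\wedge)$-closed, which is exactly the alternative route you mention at the end, and your main argument is just a direct unwinding of the same observation that dual closedness depends only on the common dual group.
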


\begin{proof}
The assertion trivially follows from these two facts:
\begin{itemize}
\item[(a)] both topologies have the same weak topology $\sigma(G,G^\wedge)$;
\item[(b)] a subgroup $H$ of $G$ is dually closed (with respect to any of these two topologies) precisely when it is $\sigma(G,G^\wedge)$-closed.
\end{itemize}
\end{proof}

\bigskip

Next, we want to show that a metrizable locally quasi-convex  bounded group is Mackey. For the proof we need the following Proposition which is interesting on its own:

\begin{proposition}
Let $(G,\tau)$ be a torsion, metrizable non-discrete abelian group. Then there exists a homomorphism $f:G\to\T$ which is not continuous.
\end{proposition}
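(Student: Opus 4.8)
The plan is to exploit the metrizability of $(G,\tau)$ by fixing a countable decreasing neighborhood basis $U_1\supseteq U_2\supseteq\cdots$ of $0$, and then to build a character $f\colon G\to\T$ that is deliberately discontinuous by making it nonzero on elements lying arbitrarily deep in this basis. The key structural input is that $G$ is torsion: every element has finite order, so $G$ decomposes (by the primary decomposition, \cite{Fuc}) into its $p$-components, and in any case each cyclic subgroup generated by an element is finite. Since $\tau$ is non-discrete, no $U_n$ is $\{0\}$, so for each $n$ I can choose a nonzero element $x_n\in U_n$; because $G$ is torsion, each $x_n$ has some finite order $o(x_n)=d_n>1$.

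First I would thin out the sequence $(x_n)$ to arrange an \emph{independence} condition, so that the character can be prescribed freely on each chosen element. Concretely, I would inductively pick a subsequence (still calling it $x_n$) such that $x_{n}$ is not in the subgroup generated by $x_1,\dots,x_{n-1}$ together with the requirement needed to keep the assignment consistent; passing to a $p$-component for a single prime $p$ if convenient, one can assume the $x_n$ generate a subgroup that is (close to) a direct sum of cyclic $p$-groups, or at least a subgroup on which a prescribed $\T$-valued function extends to a homomorphism. On the free generating part I would define $f(x_n)$ to be an element of $\T$ of order $d_n$, for instance $f(x_n)=\tfrac{1}{d_n}+\Z$, so that $f(x_n)$ stays bounded away from $0$ in $\T$ even though $x_n\to 0$ in $\tau$.

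Next I would extend the partially defined homomorphism from the subgroup $H=\langle x_n: n\in\N\rangle$ to all of $G$. Here I use that $\T$ is divisible, hence an injective $\Z$-module, so any homomorphism $H\to\T$ extends to a homomorphism $f\colon G\to\T$; this extension step is where divisibility of $\T$ does the essential work and requires no topology at all. The resulting $f$ is an honest abstract character of $G$. Finally, to see that $f$ is not $\tau$-continuous, I would note that continuity at $0$ would force $f(U_n)$ to be contained in a small neighborhood of $0$ in $\T$ for large $n$; but $x_n\in U_n$ while $f(x_n)=\tfrac1{d_n}+\Z$ does not converge to $0$ if the orders $d_n$ are bounded, and even when the $d_n$ are unbounded I would instead prescribe values like $f(x_n)=\tfrac12+\Z$ (possible whenever $2\mid d_n$) or more generally a value of fixed distance $\geq\delta>0$ from $0$, contradicting continuity.

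The main obstacle I anticipate is the independence/extension bookkeeping: one cannot always prescribe $f(x_n)$ freely because the $x_n$ may satisfy relations, and if the orders $d_n$ are unbounded it is not automatic that one can keep $f(x_n)$ a fixed distance from $0$ in $\T$. The cleanest route around this is to first reduce to a single prime by replacing $G$ with a suitable primary component (a torsion metrizable non-discrete group has at least one non-discrete primary component, or else one argues with the supremum structure), and then to extract from $(x_n)$ elements whose images generate an infinite direct sum of cyclic groups of a \emph{fixed} order $p$ (the socle), on which the assignment $f(x_n)=\tfrac1p+\Z$ is consistent and bounded away from $0$. Handling the case where the socle contribution is finite while higher-order elements accumulate at $0$ is the delicate point, and there one should prescribe $f$ on a sequence of elements of strictly increasing $p$-power order with values $\tfrac1p+\Z$ (taking $x_n$ to be $p^{k_n-1}y_n$ for suitable $y_n\in U_n$ of order $p^{k_n}$), again keeping the character values at distance $\tfrac1p$ from $0$ and thereby breaking continuity.
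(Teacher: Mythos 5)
Your overall strategy --- extract a nonzero null sequence $(x_n)$, prescribe character values on it that stay bounded away from $0$, and extend to all of $G$ using the injectivity (divisibility) of $\T$ --- is exactly the skeleton of the paper's proof, and the easy half, where the $x_n$ can be thinned to an independent family (e.g.\ in the bounded case, cf.\ the paper's remark after the proposition), goes through as you describe. The gap lies precisely in the case you yourself flag as delicate, and your proposed repairs do not close it. First, the reduction to a single primary component rests on a false claim: the group $\bigoplus_p\Z_p$ with the topology induced from $\prod_p\Z_p$ is torsion, metrizable and non-discrete, yet every primary component is finite, hence discrete; ``one argues with the supremum structure'' is not an argument. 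Second, and more seriously, in a quasicyclic situation such as $\Zpinfty\subseteq\T$ any two nontrivial cyclic subgroups meet nontrivially, so no independence whatsoever can be arranged and the socle is finite; your fallback then fails on either reading. The elements $x_n=p^{k_n-1}y_n$ need not form a null sequence (for $y_n=1/p^{k_n}$ one gets $x_n=1/p$ identically), so prescribing $f(x_n)=\tfrac1p+\Z$ witnesses nothing about continuity at $0$; while prescribing $f(y_n)=\tfrac1p+\Z$ directly on elements of strictly increasing order is inconsistent with the relations $p^{k_{n+1}-k_n}y_{n+1}=y_n$, which force $f(y_n)=p^{k_{n+1}-k_n}f(y_{n+1})=0$ whenever $f(y_{n+1})$ has order $p$.

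The paper's way around this is to give up on hitting prescribed values. After thinning only to $a_{n+1}\notin\langle a_1,\dots,a_n\rangle$ (always possible, since finitely generated torsion groups are finite), it builds $f$ by induction on the chain $\langle a_1,\dots,a_n\rangle$, requiring merely that $f(a_{n+1})\notin\T_+$. When $\langle a_{n+1}\rangle$ meets $\langle a_1,\dots,a_n\rangle$ nontrivially, the extension is constrained by a single equation $k\,f(a_{n+1})=f(ka_{n+1})$, where $k\ge 2$ is minimal with $ka_{n+1}\in\langle a_1,\dots,a_n\rangle$; the solution set consists of $k$ points spaced $1/k$ apart in $\T$, so one of them can be chosen outside the fixed neighborhood $\T_+$. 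That flexibility --- avoiding a fixed neighborhood of $0$ rather than attaining a fixed value --- is exactly what your construction lacks, and without it the non-independent (Pr\"ufer-type) case remains unproved in your write-up.
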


\begin{proof} Since $G$ is metrizable and $\tau$ is not the discrete topology, we can choose a null-sequence $(a_n)_{n\in\N}$ which satisfies $a_n\not=a_m$ whenever $n\not=m$ and $a_n\not=0$ for all $n\in\N$. Since every finitely generated torsion group is finite we may additionally assume that $a_{n+1}$ does not belong to the subgroup generated by $a_1,\ldots,a_n$ for every $n\in\N$.

We are constructing inductively a homomorphism $f_n:\langle a_1,\ldots,a_n\rangle\to \T$ in the following way:

Let $f_1:\langle a_1\rangle\to\T$ be a homomorphism which satisfies $f_1(a_1)\notin\T_+$. This is possible, since $a_1\not=0$.

Suppose that $f_n:\langle a_1,\ldots,a_n\rangle\to\T$ is a homomorphism, which satisfies $f_n(a_k)\notin\T_+$ for all $1\le k\le n$.

\noindent If $\langle a_{n+1}\rangle \cap \langle a_1,\ldots,a_n\rangle=\{0\}$, we can define a homomorphism
$f_{n+1}:\langle a_1,\ldots,a_{n+1}\rangle\to\T$ which extends $f_n$ and satisfies $f_{n+1}(a_{n+1})\notin\T_+$.

\noindent If $\langle a_{n+1}\rangle \cap \langle a_1,\ldots,a_n\rangle\not=\{0\}$, we choose the minimal natural
number $k\in\N$ such that $ka_{n+1}=k_1a_1+\ldots+k_na_n$ for suitable $k_1,\ldots,k_n\in\Z$. By assumption, $2\le k< {\rm ord}(a_{n+1})$. We define $f_{n+1}(a_{n+1})$ such that $f_{n+1}(a_{n+1})\notin\T_+$ and
$f_{n+1}(ka_{n+1})=f_n(k_1a_1+\ldots+k_na_n)$ and obtain a homomorphism $f_{n+1}:\langle a_1,\ldots,a_{n+1}\rangle\to\T$ which extends $f_n$.

In this way, we obtain a homomorphism $\wt{f}:\langle  \{a_n:\ n\in\N\}\rangle \to\T$ which can be extended to a homomorphism $f:G\to \T$. By construction,  $f$ satisfies:
$$
\forall n\in\N\  f(a_n)\notin\T_+.
$$
So $f$ is not continuous.
% and the assertion follows.
\end{proof}

\begin{remark}
(a) Using that no non-trivial sequence can converge in the Bohr topology $\mathcal P_G$ of the discrete group of $G$
(\cite{Flor}) one can obtain a proof of a stronger version of the above proposition
with ``torsion" omitted. (Just take any non-trivial null sequence $(a_n)$ as in the above proof and note
that it cannot converge in  $\mathcal P_G$, hence there exists a character $\chi$ of $G$
witnessing that, i.e., such that $\chi(a_n) \not \to 0$  in $\T$; then
$\chi$ cannot be $\tau$-continuous, as $a_n \to 0$ in $(G,\tau)$.)

(b) If we reinforce the hypothesis of the above proposition replacing ``torsion" by ``bounded", the proof can
be simplified in the following way. Passing to a subsequence of $(a_n)$ one can achieve to have
$\langle a_{n+1}\rangle \cap \langle a_1,\ldots,a_n\rangle=\{0\}$ to avoid the more complicated second case ($\langle a_{n+1}\rangle \cap \langle a_1,\ldots,a_n\rangle\not =\{0\}$) in the above proof.
\end{remark}

\noindent\textbf{Proof of Theorem B: }
Let $\tau$ be a locally quasi-convex topology on a bounded group $G$. We have to prove that $(G,\tau)$ is a Mackey group provided that: (i) $\tau$ is metrizable; or (ii) $\abs{(G,\tau)^\wedge}<\c$.

(i) Suppose that $\tau$ is metrizable. Let $\tau'$ be another locally quasi-convex group topology on $G$ compatible with $\tau$. According to \ref{lqc+bdd=linear}, $\tau$ and $\tau'$ are linear topologies. We have to show that $\tau'\subseteq \tau$.
So fix a $\tau'$-open subgroup $H$ of $G$ and denote by $q:G\to G/H$ the canonical projection. As $H$ is open in $(G,\tau')$, it is also dually closed  in $(G,\tau')$ and according to \ref{open/closed} also in $(G,\tau)$. Let
$f:G/H\to\T$ be an arbitrary homomorphism. Then $f\circ q\in (G,\tau')^\wedge=(G,\tau)^\wedge$. Let $\hat{\tau}$ denote the
Hausdorff quotient topology on $G/H$ induced by $\tau$. Of course, $(G/H,\hat{\tau})$ is a metrizable bounded group. Since every homomorphism $f:(G/H,\hat{\tau})\to \T$ is continuous, we obtain that $G/H$ is discrete by the above Proposition. This implies that $H$ is $\tau$-open. Hence, $\tau'\subseteq \tau$.

\vspace{0.1cm}

(ii) Now, suppose that $\abs{(G,\tau)^\wedge}<\c$. By Proposition \ref{lqc+bdd=linear}, $\tau$ is linear.
 Suppose that $\tau$ is not precompact. Then there exists an open subgroup $U\leq G$ satisfying that $\abs{G/U}\geq\omega$. Since $U$ is open, the quotient $G/U$ is discrete. Hence $(G/U)^\wedge$ is compact and infinite, therefore $\abs{(G/U)^\wedge}\geq\cont$. Since $\abs{G^\wedge}\geq\abs{(G/U)^\wedge}$ the inequality $\abs{(G,\tau)^\wedge}\geq\cont$ follows. This contradiction shows that $\tau$ is precompact.

 Let $\tau'$ be another locally quasi-convex group topology on $G$, which is compatible with $\tau$. According to
  \ref{lqc+bdd=linear}, $\tau'$ is linear.
  As $(G,\tau')^\wedge=(G,\tau)^\wedge$ is countable, the above argument applied to $\tau'$
  shows that $\tau'$ is precompact as well.
  %By Lemma \ref{LemmaBddYDualContableEntoncesPrecompacta}, $\tau$ and $\tau'$ are precompact topologies.
  Since $\tau$  and $\tau'$ are compatible, they must coincide. Therefore, $\tau$ is Mackey.

 %Since $\abs{(G,\tau)^\wedge}<\c$ and $\tau$ is linear, by Lemma \ref{linear+dualpequeno=>precompacto}, $\tau$ is %precompact.

\bigskip

\noindent\textbf{Proof of Corollary B1:}
We have to prove that if $(G,\tau)$ is a metrizable, locally quasi-convex and bounded group, then:
\begin{enumerate}
\item $\abs{\mathcal{C}(G,\tau)}=1$ if and only if $\tau$ is precompact.
\item otherwise $\abs{\mathcal{C}(G,\tau)}\geq 2^\c$; in this case, $\tau$ is the only metrizable topology in $\mathcal{C}(G,\tau)$.
\end{enumerate}

If $\abs{\mathcal{C}(G,\tau)}=1$ it is clear that $\tau$ is precompact (for any dual pair $(G,G^\wedge)$ there exists one precompact topology). Conversely, if $\tau$ is precompact, Theorem B implies that $\tau$ is Mackey. Hence it is clear that $\abs{\mathcal{C}(G,\tau)}=1$.

The proof of 2. relies on the fact that $\tau$ is linear (Proposition \ref{lqc+bdd=linear}). Assume that $(G,\tau)$ is not precompact. Then, there exists a $\tau$-open subgroup $U$ such that $G/U$ is infinite. Since $U$ is open, $G/U$ is discrete. By Theorem 3.10 in \cite{ADM1}, the family $\mathcal{C}((G/U)_d)$ can be embedded in $ \mathcal{C}(G,\tau)$. We denote by $\mathcal{F}il_{G/U}$ the set of all filters on $G/U$ and we recall that $\abs{\mathcal{F}il_{G/U}}=2^{2^{\abs{G/U}}}$. By Theorem 4.5 in \cite{ADM1}, we have that $\abs{\mathcal{C}((G/U)_d)}=\abs{\mathcal{F}il_{G/U}}$, since $G/U$ has infinite rank (being an infinite bounded group). So $$\abs{\mathcal{C}(G,\tau)}\geq\abs{\mathcal{C}((G/U)_d)}=2^{2^{\abs{G/U}}}\geq 2^{2^\omega}=2^\c$$
holds. It is also clear from Theorem B, that $\tau$ is the only metrizable topology in $\mathcal{C}(G,\tau)$.

\section{The unbounded case: proof of Theorem C}\label{sectionUnbounded}

In order to prove that unbounded groups do not fit into Varopoulos paradigm, we need first to prove that the Mackey topology is "hereditary" for open subgroups. In other words, an open subgroup of a Mackey group is also a Mackey group in the induced topology. To this end we recall next an standard   extension of a  topology from a subgroup to the whole group.
\begin{definition}
Let $G$ be an abelian group and $H$ a subgroup. Consider on $H$ a group topology $\tau$. We define the extension $\overline{\tau}$ of $\tau$ as the topology on $G$ which  has as a basis  of neighborhoods  at zero the neighborhoods of  $(H, \tau)$.
\end{definition}

If the topology $\tau$ on $H$ is metrizable, the topology $\overline{\tau}$ on $G$ is metrizable. Analogously, if $\tau$ is locally quasi-convex, then $\overline{\tau}$ is also locally quasi-convex (indeed, a subset which is quasi-convex in $H$ is quasi-convex in $G$, due to the fact that $H$ is an open subgroup in $\overline{\tau}$).

\begin{lemma}\label{openSubgroupsAreMackey}\label{theorem11Dadidi}\label{corolarioSubgrupoAbiertoMackey}\label{lemma11.1Dadidi}
Let $H$ be an open subgroup of  the topological group $(G,\tau)$. If $\tau$ is the Mackey topology, then the induced topology $\tau_{\mid H}$ is also Mackey. As a consequence, if $\nu$ is a metrizable, locally quasi-convex {\em  non-Mackey} group topology on $H$, then there exists a metrizable locally quasi-convex-group topology on $G$ which is not Mackey, namely $\overline{\nu}$.
\end{lemma}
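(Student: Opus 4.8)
The plan is to prove the first (main) assertion and then read off the ``as a consequence'' part by contraposition. To show that $\tau_{\mid H}$ is the Mackey topology on $H$, I would start from an arbitrary locally quasi-convex topology $\nu$ on $H$ that is compatible with $\tau_{\mid H}$, i.e. $(H,\nu)^\wedge=(H,\tau_{\mid H})^\wedge$, and I must verify $\nu\leq\tau_{\mid H}$. The idea is to transport the problem up to $G$ via the extension operation: form $\overline{\nu}$ on $G$, which is locally quasi-convex because $\nu$ is (as already observed after the definition of the extension) and for which $H$ is an open subgroup, since its $\nu$-neighbourhoods of $0$ are by construction a basis of $\overline{\nu}$-neighbourhoods of $0$. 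If I can show that $\overline{\nu}$ is compatible with $\tau$ on $G$, then the hypothesis that $\tau$ is Mackey forces $\overline{\nu}\leq\tau$, and restricting to $H$ yields $\nu=\overline{\nu}_{\mid H}\leq\tau_{\mid H}$, which is exactly what is needed. (Here $\overline{\nu}_{\mid H}=\nu$ because the $\overline{\nu}$-neighbourhoods of $0$ already lie inside $H$.)

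The crucial technical ingredient is the following elementary fact about open subgroups: if $H$ is open for a group topology $\sigma$ on $G$, then a homomorphism $\chi:G\to\T$ is $\sigma$-continuous if and only if its restriction $\chi_{\mid H}$ is $\sigma_{\mid H}$-continuous. Indeed, a homomorphism is continuous iff it is continuous at $0$, and since $H$ is open the $\sigma_{\mid H}$-neighbourhoods of $0$ form a basis of $\sigma$-neighbourhoods of $0$, so the nontrivial implication is immediate. Applying this once with $\sigma=\tau$ and once with $\sigma=\overline{\nu}$ (both making $H$ open) gives
$$(G,\tau)^\wedge=\{\chi\in\Hom(G,\T):\chi_{\mid H}\in(H,\tau_{\mid H})^\wedge\},\qquad (G,\overline{\nu})^\wedge=\{\chi\in\Hom(G,\T):\chi_{\mid H}\in(H,\nu)^\wedge\}.$$
Since $(H,\nu)^\wedge=(H,\tau_{\mid H})^\wedge$ by the choice of $\nu$, the two right-hand sides coincide, whence $(G,\overline{\nu})^\wedge=(G,\tau)^\wedge$; that is, $\overline{\nu}$ and $\tau$ are compatible, completing the first part. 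Along the way one should record that $\tau_{\mid H}$ is itself Hausdorff and locally quasi-convex, so that it genuinely lies in the family $\mathcal{C}(H,\tau_{\mid H})$; local quasi-convexity is inherited by subgroups.

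For the consequence, suppose $\nu$ is a metrizable, locally quasi-convex, non-Mackey topology on $H$. Its extension $\overline{\nu}$ is locally quasi-convex and metrizable (the latter as noted after the definition, since a countable $\nu$-basis at $0$ is a countable $\overline{\nu}$-basis at $0$ and $\overline{\nu}$ is Hausdorff). Finally $\overline{\nu}$ cannot be Mackey: if it were, then applying the first part of the lemma with $\tau$ replaced by $\overline{\nu}$ (recall $H$ is $\overline{\nu}$-open) would make $\overline{\nu}_{\mid H}=\nu$ Mackey on $H$, contradicting the hypothesis. Thus $\overline{\nu}$ is the desired metrizable locally quasi-convex non-Mackey topology on $G$. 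I expect the only real subtlety to be the compatibility step $(G,\overline{\nu})^\wedge=(G,\tau)^\wedge$, which is where openness of $H$ is used decisively; everything else is bookkeeping about the extension operation and the hereditary behaviour of metrizability and local quasi-convexity for open subgroups.
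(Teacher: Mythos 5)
Your proposal is correct and follows essentially the same route as the paper: extend $\nu$ to $\overline{\nu}$, use the openness of $H$ in both $\tau$ and $\overline{\nu}$ to show that a character of $G$ is continuous iff its restriction to $H$ is, deduce that $\overline{\nu}$ and $\tau$ are compatible, invoke the Mackey property of $\tau$ to get $\overline{\nu}\leq\tau$ and hence $\nu\leq\tau_{\mid H}$, and obtain the second assertion by contraposition. Your packaging of the compatibility step as a single restriction criterion is a slightly tidier presentation of exactly the two inclusions the paper checks.
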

\begin{proof}
In order to prove that $(H, \tau_{|H})$ is also a Mackey group take a locally quasi-convex topology $\nu$ in $H$ such that  $(H, \nu)^\wedge = (H,\tau_{\mid H})^\wedge$. Let $\overline{\nu}$ be the extension of $\nu$ to $G$.

 Let us prove that $(G,\overline{\nu})^\wedge = (G, \tau)^\wedge$. Take $\phi \in (G,\overline{\nu})^\wedge $. Then $\phi_{|H}$ is $\nu$-continuous, and, since $\nu$ and $\tau_{\mid H}$ are compatible, $\phi_{\mid H}$ is also a character of $(H, \tau_{|H})$. Since $H$ is $\tau$-open, $\phi$ is  $\tau$-continuous. Thus $(G,\overline{\nu})^\wedge \leq (G, \tau)^\wedge$.  The same argument can be used to prove  the converse inequality, taking into account that  $H$ is  an open subgroup in $\overline{\nu}$. So $\overline{\nu} $ and $\tau$ are compatible topologies.

Since $(G, \tau)$ is Mackey, we obtain that $\overline{\nu}\leq \tau$ and this inequality also holds for  their restrictions to $H$,  $\nu \leq \tau_{|H}$. Hence $(H, \tau_{|H})$ is Mackey.

Let $\nu$ be a metrizable locally quasi-convex group topology on $H$ which is not Mackey and $\overline{\nu}$ its extension to $G$. Clearly, $H$ is $\overline{\nu}$-open,  and $\overline{\nu}$  is a  metrizable  locally quasi-convex topology on $G$. If $\overline{\nu}$ were Mackey, by our previous arguments, $\nu$ should be Mackey as well, a contradiction.
\end{proof}

We do not know if the converse of Lemma \ref{openSubgroupsAreMackey} holds in general:
% The converse of Lemma \ref{theorem11Dadidi} is the following (open) question:

\begin{question}
Let $(G,\tau)$ be a topological group and $H\leq G$ an open subgroup. If $\tau_{\mid H}$ is the Mackey topology, is $\tau$ the Mackey topology for $G$?
\end{question}

We prove it for the class of bounded groups.

%\begin{lemma}\label{lemmaCompatibles}
%Let $(G,\tau)$ be a bounded locally quasi-convex topological group. Let $H\leq G$ be a $\tau$-open subgroup. Let $\tau'$ be a locally quasi-convex topology on $G$, which is compatible with $\tau$. Then $\tau_{\mid H}$ and $\tau'_{\mid H}$ are compatible.
%\end{lemma}
%\begin{proof}
% Since $H$ is $\tau$-open, it is $\tau$-closed. By Theorem \ref{teoremaEquivalenciasBounded}, $H$ is $\tau'$-closed as well. We shall prove that the $\tau_{\mid H}$-closed subgroups coincide with the $\tau'_{\mid H}$-closed ones. Then, by Theorem \ref{teoremaEquivalenciasBounded}, both topologies are compatible.
%
%Let $N\leq H$ be a $\tau_{\mid H}$-closed subgroup. In particular, $N$ is $\tau$-closed. Since $\tau$ and $\tau'$ are compatible, by Theorem \ref{teoremaEquivalenciasBounded}, $N$ is $\tau'$-closed. Since $H$ is $\tau'$-closed, $N$ is $\tau'_{\mid H}$-closed.
%
%Now, pick $N\leq H$ a $\tau'_{\mid H}$-closed subgroup. Since $H$ is $\tau'$-closed, $N$ is $\tau'$-closed. By Theorem \ref{teoremaEquivalenciasBounded}, $N$ is $\tau$-closed. Since $H$ is $\tau$-open, $N$ is $\tau_{\mid H}$-closed.
%\end{proof}

\begin{proposition}\label{Daniel}
Let $(G,\tau)$ be a bounded topological abelian group and $H\leq G$ an open subgroup. If $(H,\tau_{\mid H})$ is Mackey, then $(G,\tau)$ is Mackey.
\end{proposition}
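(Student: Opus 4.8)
The plan is to prove Proposition \ref{Daniel} by reducing the statement about $G$ to the already-known Mackey property of the open subgroup $H$, exploiting the extra rigidity that boundedness confers. Since $G$ is bounded and $\tau$ is locally quasi-convex, Proposition \ref{lqc+bdd=linear} tells us that $\tau$ is linear, and the same will be true of any compatible locally quasi-convex topology on $G$. So throughout we are working in the class $\Lin$, where by Corollary \ref{corollary7.1Dadidi} being Mackey is the same as being $\LQC$-Mackey. Let $\tau'$ be an arbitrary locally quasi-convex (hence linear) group topology on $G$ compatible with $\tau$; the goal is to show $\tau' \subseteq \tau$.

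First I would pass to the subgroup $H$. Restricting, $\tau'_{\mid H}$ is a locally quasi-convex topology on $H$. The key preliminary step is to verify that $\tau'_{\mid H}$ and $\tau_{\mid H}$ are \emph{compatible}, i.e. that $(H,\tau'_{\mid H})^\wedge = (H,\tau_{\mid H})^\wedge$. This is where openness of $H$ does the work: a character of $(H,\tau_{\mid H})$ extends to a character of $(G,\tau)$ because $H$ is $\tau$-open (and symmetrically for $\tau'$, since $H$ will be $\tau'$-open as well once we check that). Combined with $(G,\tau)^\wedge = (G,\tau')^\wedge$ and the fact that restriction of characters to an open subgroup is surjective onto $H^\wedge$, this should give equality of the two dual groups of $H$. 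Once compatibility on $H$ is established, the hypothesis that $(H,\tau_{\mid H})$ is Mackey yields $\tau'_{\mid H} \subseteq \tau_{\mid H}$.

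Next I would leverage linearity to upgrade the inclusion on $H$ to an inclusion on $G$. Since $\tau'$ is linear, it suffices to show that every $\tau'$-open subgroup $V$ of $G$ is $\tau$-open. The strategy is to intersect with $H$: the subgroup $V \cap H$ is $\tau'_{\mid H}$-open, hence $\tau_{\mid H}$-open by the previous step, hence $\tau$-open in $G$ because $H$ is $\tau$-open and the topology on $H$ is induced. Now $V \cap H$ has finite index in $V$ exactly when $H$ has finite index in $G$, which need not hold; so the delicate point is to get from the openness of $V\cap H$ to the openness of $V$ itself. Here I expect to use that $V \supseteq V \cap H$ with $V\cap H$ being $\tau$-open, so $V$ is a union of $\tau$-cosets of the $\tau$-open set $V\cap H$ and is therefore $\tau$-open. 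Thus every $\tau'$-open subgroup is $\tau$-open, giving $\tau' \subseteq \tau$, and since $\tau'$ was an arbitrary compatible locally quasi-convex topology, $\tau$ is the Mackey topology for $G$.

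The main obstacle, I anticipate, is the compatibility check on $H$ — specifically confirming that $H$ is $\tau'$-open and that restriction gives the identification $(G,\tau')^\wedge \cong \{$characters extending those on $H\}$ cleanly on both sides, so that the equality of the ambient duals on $G$ forces equality of the restricted duals on $H$. One must be careful that a priori $H$ need not be $\tau'$-open; this has to be argued from $H$ being $\tau$-open together with compatibility (via dual closedness, using Proposition \ref{open/closed} in the linear setting, where open subgroups are precisely the dually closed ones that are also $\sigma(G,G^\wedge)$-open). Once that subtlety is handled, the passage from $H$ to $G$ via linearity is routine, and the conclusion follows from the definition of the Mackey topology.
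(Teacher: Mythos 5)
Your overall architecture matches the paper's: establish that $\tau_{\mid H}$ and $\tau'_{\mid H}$ are compatible, invoke the Mackey property of $H$ to get $\tau'_{\mid H}\subseteq\tau_{\mid H}$, and then use linearity plus openness of $H$ to lift this to $\tau'\subseteq\tau$ (your coset argument for the last step is exactly the paper's $\tau'\leq\overline{\tau'}\leq\overline{\tau}=\tau$). The gap is in the compatibility check, in the direction $(H,\tau'_{\mid H})^\wedge\subseteq(H,\tau_{\mid H})^\wedge$. You plan to extend a $\tau'_{\mid H}$-continuous character of $H$ to a $\tau'$-continuous character of $G$ by first showing that $H$ is $\tau'$-open, and you propose to derive this from dual closedness via Proposition \ref{open/closed}. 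This cannot work: a $\tau$-open subgroup need not be $\tau'$-open for a compatible $\tau'$. Take $G=\bigoplus_\omega\Z_2$ with $\tau$ the discrete topology and $\tau'=\tau^+$ its Bohr topology (these are compatible and both locally quasi-convex), and let $H$ be any subgroup of infinite index; then $H$ is $\tau$-open, but it cannot be $\tau'$-open, since $G/H$ would then be an infinite discrete quotient of a precompact group. Proposition \ref{open/closed} only yields that $H$ is $\sigma(G,G^\wedge)$-closed in $(G,\tau')$, and the criterion you invoke (``open $=$ dually closed and $\sigma(G,G^\wedge)$-open'') is not available: $\sigma(G,G^\wedge)$-openness is a much stronger property that a $\tau$-open subgroup does not in general possess.

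The paper's proof supplies precisely the missing idea and never needs $H$ to be $\tau'$-open. Given $\chi\in(H,\tau'_{\mid H})^\wedge$, linearity of $\tau'$ (Proposition \ref{lqc+bdd=linear}, using boundedness again) provides a $\tau'$-open subgroup $U$ of $G$ with $\chi(H\cap U)=\{0\}$. One then extends $\chi$ to a homomorphism $\chi':H+U\to\T$ by $h+u\mapsto\chi(h)$, which is well defined because $\chi$ annihilates $H\cap U$, and which is continuous on the $\tau'$-open subgroup $H+U$ because it annihilates $U$. Extending $\chi'$ from the open subgroup $H+U$ to all of $(G,\tau')$ and using $(G,\tau')^\wedge=(G,\tau)^\wedge$ shows that $\chi$ is the restriction of a $\tau$-continuous character, hence $\tau_{\mid H}$-continuous. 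If you replace your ``$H$ is $\tau'$-open'' step by this $H+U$ construction, the rest of your argument goes through as written.
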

\begin{proof}
Let $\tau'$ be a locally quasi-convex group topology on $G$ which is compatible with $\tau$.
 Note first that $\tau$, $\tau'$, $\tau_{\mid H}$ and $\tau'_{\mid H}$ are linear topologies (Proposition \ref{lqc+bdd=linear}). We are going to prove that $\tau_{\mid H}$ and $\tau'_{\mid H}$ are compatible.

 Let $\chi:(H,\tau)\to\T$ be  a continuous character. Since $H$ is an open subgroup of $(G,\tau)$, there exists a continuous character $\wt{\chi}\in (G,\tau)^\wedge$ extending $\chi$. Since $\tau'$ is compatible with $\tau$, we obtain
 $\wt{\chi}\in(G,\tau')^\wedge$ and hence $\chi|_H\in(H,\tau'|_H)^\wedge$.

 Assume conversely, that $\chi\in(H,\tau'|_H)^\wedge$.  Since $\chi$ is $\tau'$-continuous, there exists an open subgroup $U\in\tau'$ such that $\chi(H\cap U)=\{0\}$. Hence the mapping   $\chi':H+U\to\T,\ h+u\mapsto
 \chi(h)$ ($h\in H$ and $u\in U$) is a well defined homomorphism which extends $\chi$. As $\chi'(U)=\{0\}$, we obtain that $\chi$ is $\tau'|_{H+U}$ is continuous. Since $H+U$ is an open subgroup of $(G,\tau')$, $\chi'$ can be extended to a continuous homomorphism $\wt{\chi}\in(G,\tau')^\wedge=(G,\tau)^\wedge$.
 As above, this shows that $\chi|_H\in (H,\tau|_H)^\wedge$.

 Since $\tau_{\mid H}$ is Mackey, we have that $\tau'_{\mid H}\leq\tau_{\mid H}$. Since  $H$ open in $\tau$, it follows that $\tau'\leq\ol{\tau'}\le \ol{\tau}=\tau$ where $\ol{\tau}$ and $\ol{\tau'}$ denote the extensions to $G$. Since $\tau_{\mid H}$ locally quasi-convex and $H$ an $\tau$-open subgroup, the topology  $\tau$ is locally quasi-convex and consequently the Mackey topology on $G$.
\end{proof}

\begin{proposition}\label{theorem12Dadidi}
Let $(m_n)$ be a sequence of natural numbers with $m_n\rightarrow\infty$ and $G = \bigoplus\Z_{m_n}$. Then
the metrizable locally quasi-convex topology $\tau$ on $G$ inherited from the product $\prod_\omega\Z_{m_n}$ is not Mackey.
%Namely, the topology inherited from the product $\prod_\omega\Z_{m_n}$.
\end{proposition}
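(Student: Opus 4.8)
The plan is to exhibit a locally quasi-convex group topology on $G$ that is strictly finer than $\tau$ yet compatible with it; this shows $\tau$ is not the top element of $\mathcal{C}(G,\tau)$ and hence not Mackey. First I would compute the dual group. The subgroups $W_N=\bigoplus_{n>N}\Z_{m_n}$ form a neighborhood basis at $0$ for $\tau$ (so $\tau$ is linear and precompact, with completion $\prod_\omega\Z_{m_n}$), whence a homomorphism $\phi\colon G\to\T$ is $\tau$-continuous iff $\ker\phi$ is open iff $\phi$ kills some $W_N$, i.e. iff $\phi$ has finite support. Identifying $\Z_{m_n}^\wedge$ with $\Z_{m_n}$ via $k\mapsto k/m_n$, this gives $(G,\tau)^\wedge=\bigoplus_n\Z_{m_n}$, the finite-support elements of $\prod_n\Z_{m_n}=\Hom(G,\T)$.

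Next I would build the finer topology from a convergent sequence of characters. Let $\chi_n\in G^\wedge$ be the character supported on the $n$-th coordinate, $\chi_n((x_k)_k)=x_n/m_n$. For each fixed $x\in G$ one has $\chi_n(x)=0$ for $n$ beyond the (finite) support of $x$, so $\chi_n\to 0$ in $\sigma(G^\wedge,G)$ and hence $K:=\{\chi_n:n\in\N\}\cup\{0\}$ is $\sigma(G^\wedge,G)$-compact. Let $\tau_K$ be the topology of uniform convergence on $K$, with quasi-convex basic neighborhoods $V_\eps=\{x:\chi_n(x)\in[-\eps,\eps]+\Z\ \forall n\}=\{x:|x_n|\le\eps m_n\ \forall n\}$, and set $\nu=\sup(\tau,\tau_K)$, whose basic neighborhoods are the sets $W_N\cap V_\eps$. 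Being a supremum of two locally quasi-convex topologies, $\nu$ is locally quasi-convex. It is strictly finer than $\tau$: the neighborhood $V_{1/4}=K^{\triangleleft}$ contains no $W_N$, since for large $n>N$ the element whose only nonzero coordinate is $x_n=\lfloor m_n/2\rfloor$ lies in $W_N$ but not in $K^{\triangleleft}$ (here $m_n\to\infty$ is used).

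The heart of the matter is compatibility: $(G,\nu)^\wedge=G^\wedge$. The inclusion $\supseteq$ is immediate from $\nu\supseteq\tau$. For $\subseteq$ I must show that every $\nu$-continuous homomorphism $\phi=(c_n)_n\in\Hom(G,\T)$ has finite support. Assuming $\phi$ is continuous on some basic neighborhood $W_N\cap V_\eps$ (with $\eps<1/4$) but has infinite support $S$, I would derive a contradiction by producing $x\in W_N\cap V_\eps$ with $\phi(x)\notin\T_+$. The key elementary lemma, proved by a case analysis on the size of $a_n$ (where $c_n=a_n/m_n$) relative to $m_n$ and using $m_n\to\infty$, is that for each sufficiently large $n\in S$ one can choose $x_n$ with $|x_n|\le\eps m_n$ so that the single-coordinate contribution $c_nx_n\bmod 1$ either already lies in the ``forbidden'' arc $(1/4,3/4)$ or lies in the controlled range $[\eps/2,\eps]$. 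In the first case one coordinate suffices; otherwise, assembling controlled contributions (each a step of size at most $\eps<1/4$) from finitely many coordinates of $S$ greater than $N$, the partial sums must enter $(1/4,3/4)$, and the resulting $x$ lies in $W_N\cap V_\eps$ yet satisfies $\phi(x)\notin\T_+$. This contradiction forces $\phi$ to have finite support, proving $(G,\nu)^\wedge\subseteq G^\wedge$.

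Finally, $\nu\in\mathcal{C}(G,\tau)$ with $\nu\supsetneq\tau$ exhibits a compatible topology strictly above $\tau$, so $\tau$ is not Mackey. I expect the main obstacle to be this compatibility step, specifically the per-coordinate extraction lemma: it is exactly here that the hypothesis $m_n\to\infty$ is essential (if the $m_n$ were bounded, $G$ would be bounded and, by Theorem B, $\tau$ would be Mackey), and getting the quantitative bookkeeping right---so that the combined element stays inside $V_\eps$ while its image is driven out of $\T_+$---is the delicate part.
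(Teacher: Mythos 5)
Your proposal is correct and follows essentially the same route as the paper: both endow $G$ with the topology of uniform convergence on the null sequence of coordinate characters $\{\pm e_n\}\cup\{0\}$ and prove compatibility by showing that a character of infinite support can be driven out of $\T_+$ on the basic neighbourhoods $V_\eps$ via a partial-sum argument (your per-coordinate normalization of each contribution into $[\eps/2,\eps]$ plays exactly the role of the paper's case split on the size of $|\chi_n|/m_n$). The only cosmetic difference is that you pass to $\sup(\tau,\tau_K)$, which is unnecessary since $\tau_K$ already refines $\tau$ (choosing $m$ with $4m>m_n$ for all $n\le N$ forces the first $N$ coordinates of every element of $V_m$ to vanish, so $V_m\subseteq W_N$).
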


\begin{proof}
  The idea of the proof is to fix appropriately a   sequence $\mathbf{c}\subset G^\wedge = \bigoplus_\omega\Z_{m_n}^\wedge$. If we denote its range again by $\mathbf{c}$ and define $\tauc$  the topology of uniform convergence on $\mathbf{c}$, $\tauc$ is non-precompact, and strictly finer than
  $\tau$, but still compatible. Hence, $\tau$ is not Mackey.

%For a sequence $(m_n)$  of natural numbers satisfying $m_n\rightarrow \infty$ and $m_{n+1}\geq m_n$ for all $n \in \N$ let $G:=\bigoplus_\omega\Z_{m_n}$ endowed with the topology inherited from the product $\prod_\omega \Z_{m_n}$.

 Observe that $G^*=\prod\Z_{m_n}^\wedge$ and $G^\wedge=\bigoplus_\omega\Z_{m_n}^\wedge$.
 Clearly, $\Z_{m_n}^\wedge\cong \Z_{m_n}$,
  and for convenience we make the following identifications:
$$\Z_{m_n}^\wedge=\left(-\frac{m_n}{2},\frac{m_n}{2}\right]\cap\Z$$
 and
\begin{equation}\label{dagdag}
\Z_{m_n}=\left\{\frac{k}{m_n}: k\in\left(-\frac{m_n}{2},\frac{m_n}{2}\right]\cap\Z\right\}.
\end{equation}

We can assume without loss of generality that the sequence $(m_n)$ is non-decreasing, since a rearrangement of the sequence $(m_n)$ leads to an isomorphic group.

For $x=(x_1,x_2,\dots)\in G$ and $\chi=(\chi_1,\chi_2,\dots)\in G^\wedge$, we have $\chi(x)=\sum_{n=1}^\infty\chi_n x_n+\Z$.

For $n\in\N$, let $(e_n)=(0,\dots,0,%\stackrel{(n)}
{1},0,\dots)\in G^\wedge$, with $1$ in $n$-th position and define
%Then $e_n\to 0$, so
$\mathbf{c}=\{\pm e_n:n\in\N\}\cup\{0\}\subset G^\wedge$.\footnote{Here it was stated that ${\bf c}$ is a compact set without any reference to the topology. In $G^\wedge$ it is surely not compact, as the character group is
discrete. So I eliminated "compact".}
%is a compact subset of $G^\wedge$.
Identifying $\T$ with $(-\frac{1}{2},\frac{1}{2}]$, we let $\T_m= \{x\in \T: \abs{x}\leq\frac{1}{4m}\}$ and
\begin{equation}\label{(99)}
V_m
%(=V_{\mathbf{c},m})
:=\{x\in G:e_n(x)\in\T_m\mbox{ for all } n\in\N\}\stackrel{e_n(x)=x_n}{=}\left\{x\in G:\abs{x_n}\leq\frac{1}{4m}\mbox{ for all }n\in\N\right\}.\quad\quad
\end{equation}

The family $\{V_m:m\in\N\}$ is a neighborhood basis of $0$ for the topology $\tauc$ of uniform convergence on $\mathbf{c}$.

Writing the elements $x=(x_n)$ of $G$, with $x_n=\frac{k_n}{m_n}$ (as in (\ref{dagdag})), one has $V_m=\left\{x\in G:\abs{k_n}\leq\frac{m_n}{4m}\mbox{ for all } n\right\}$.

To conclude the proof of the proposition we need the following lemma:

\begin{lemma}\label{lemma12.1Dadidi}
$(G,\tauc)^\wedge=\bigoplus_\omega\Z_{m_n}^\wedge$.
\end{lemma}

\begin{proof}
The fact $\bigoplus_\omega\Z_{m_n}^\wedge\leq (G,\tauc)^\wedge$ is straightforward. Indeed, let $\chi\in\bigoplus_\omega\Z_{m_n}^\wedge$. Then there exists $n_1\in\N$ such that $\chi_n=0$ if $n\geq n_1$. Let $m=m_{n_1}$, hence $x_1=\dots=x_{n_1-1}=0$ for all $x\in V_m$. Then $\chi(V_m)=\{0_\T\}$.

Let $\chi=(\chi_n)\in\prod_\omega\Z_{m_n}^\wedge\setminus\bigoplus_\omega\Z_{m_n}^\wedge$. Then the set
$A:=\{n\in\N:\chi_n\neq 0\}$ is infinite. Our aim is to prove that $\chi\not\in(G,\tauc)^\wedge$. We argue by
contradiction,  assuming that $\chi\in(G,\tauc)^\wedge$. Then $\chi\in V_m\tr:=\{\phi\in (G,\tauc)^\wedge:\phi(V_m)\subset\T_+\}$ for some  $m\in\N$. Hence, we must
find $x\in V_m$ with $\chi(x)\notin\T_+$.

To this end, we pick
\begin{equation}\label{n03Abril}
n_0:= \min\left\{n\in\N:\frac{m_n}{4m}\geq\frac{5}{2}\right\} .
\end{equation} Equivalently, $\dis \frac{1}{m_n}\leq\frac{1}{10m}$ if $n\geq n_0$.

We describe next in $(a)$ some elements of $V_m$, which will be suitable for our purposes and in $(b)$ a condition   for the components of $\chi$.

\begin{claim}\label{LastClaim}
\begin{itemize}
\item[(a)] If $x_n=0$ for $n<n_0$ and $x_n\in\{0,\pm\frac{1}{m_n}\}$ for all $n\ge n_0$, then $x=(x_n)\in V_m$.
\item[(b)] $\dis \frac{\abs{\chi_n}}{m_n}\leq\frac{1}{4}\mbox{ for all }n_0\leq n.$
\end{itemize}
\end{claim}

\begin{proof}
(a)
 Indeed, since $\dis \frac{1}{m_n}\leq\frac{1}{m_{n_0}}\leq\frac{1}{10m}$ if $n\geq n_0$, it follows  from (\ref{(99)}) that $x\in V_m$.

(b) By item (a), we have that $x=(0,\dots,\stackrel{(n)}{\frac{1}{m_n}},\dots)\in V_m$. Since $\chi\in V_m\tr$, it follows that $\dis \abs{\chi(x)}=\frac{\abs{\chi_n}}{m_n}\leq\frac{1}{4}$.
\end{proof}

To continue the proof of Lemma \ref{lemma12.1Dadidi}, define, for each $k\in\left\{0,1,\dots,\frac{m_{n_0}}{2}\right\}$, the set

\begin{equation}\label{Ak3Abril}
A_k:=\left\{n\in A:\frac{k}{m_{n_0}}<\frac{\abs{\chi_n}}{m_n}\leq\frac{k+1}{m_{n_0}}\right\}.
\end{equation}

Then $\left\{A_k: 0 \leq k\leq \frac{m_{n_0}}{2} \right\}$ is a finite partition of $A$ and at least one $A_k$ is infinite,
as $A$ is infinite. In order to define an element $x\in V_m$ which satisfies  $\chi(x)\notin\T_+$ we distinguish
the cases $k\ge 1$ and $k=0$.

\vspace{0.2cm}

\underline{Case 1: $\mathbf {k\geq 1.}$} Since $A_k$ is infinite, one can find $n_0\le n_1 < n_2 < \dots <n_{m_{n_0}}$ in $A_k$.
%We consider $\mathcal{N}\in[A_k]^{m_{n_0}}$ and enumerate it in increasing order $\mathcal{N}:=\{n_1,n_2,\dots,n_{m_{n_0}}\}\subset A_k$.
 Define
\[ x_{n_i} :=\frac{1}{m_{n_i}}\sign(\chi_{n_i})\]
By the definition of $A_k$, we have $\dis \frac{k}{m_{n_0}}<\chi_{n_i}(x_{n_i})$  for $\dis 1\leq i\leq m_{n_0}$.
%=\frac{\abs{\chi_{n_i}}}{m_{n_i}}\leq\frac{k+1}{m_{n_0}}$
% for $n\in A_k$.
Thus,
 $$
 \sum_{i=1}^{m_{n_0}}\chi_{n_i}(x_{n_i})> k \geq 1.
% \in(k,k+1].
 $$
 Taking into account that each summand in the above sum satisfies $\chi_n(x_n)\leq\frac{1}{4}$ by Claim \ref{LastClaim}(b), we conclude that there exists
a natural $N\leq m_{n_0}$ such that
$$
\frac{1}{4}<\sum_{i=1}^N\chi_{n_i}(x_{n_i})<\frac{3}{4}.
$$
Define
\[ x'_n:= \left\{ \begin{array}{lcl}
         \frac{1}{m_{n_i}}\sign(\chi_{n_i}) &\ \mbox{ when \ $n=n_i$ \   for some }\ & 1\leq i\leq N\\
        0 &   \mbox{otherwise} &
         \end{array} \right. \]

By Claim \ref{LastClaim}(a), $x'=(x'_n)\in V_m$ and by construction $\chi(x')\notin\T_+$.

\vspace{0.2cm}

\underline{Case 2: $\mathbf{k=0.}$}
\vspace{0.2cm}

That is: $A_0$ is infinite. Choose $J\subset A_0$ with $\abs{J}=2m$. For $n\in J$, define
$$
j_n:=\frac{\dis \left\lfloor\frac{m_n}{4m\abs{\chi_n}}\right\rfloor \sign(\chi_n)}{m_n}.
$$
First, note that for all $n \in J$ one has $j_n\neq 0$, as $\dis \frac{m_n}{4m\abs{\chi_n}}
%\stackrel{n\in A_0}
{\geq}\frac{m_{n_0}}{4m}
%\stackrel{(\ref{n03Abril})}
%{\geq}\frac{10m}{4m}
>2$, where the last inequality is due to (\ref{n03Abril}).
On the other hand,  \begin{equation}\label{Estimacion13Abril}\abs{j_n}\leq\abs{\frac{\dis \frac{m_n}{4m\abs{\chi_n}}}{m_n}}=\frac{1}{4m\abs{\chi_n}}.\end{equation}
This gives $\abs{j_n}\leq\frac{1}{4m}$, as  $\abs{\chi_n}\geq 1$. Define
\[ x_n:= \left\{ \begin{array}{lcl} j_n &\ \mbox{ when \ $n\in J$}\\
        0 &   \mbox{otherwise} &
         \end{array} \right. \]

Then, $x = (x_n)\in V_m$, by equation (\ref{(99)}).

In addition, \begin{equation}\label{Estimacion23Abril}\abs{x_n}\geq\frac{\dis \frac{m_n}{4m\abs{\chi_n}}-1}{m_n}=\frac{1}{4m\abs{\chi_n}}-\frac{1}{m_n}.\end{equation}

Combining (\ref{Estimacion13Abril}) and (\ref{Estimacion23Abril}), we get, for $n\in J$:$$\frac{3}{20m}=\frac{1}{4m}-\frac{1}{10m}\leq\frac{1}{4m}-\frac{1}{m_{n_0}}\stackrel{n\in A_0}{\leq}\frac{1}{4m}-\frac{\abs{\chi_n}}{m_n}\stackrel{(\ref{Estimacion23Abril})}{\leq}
|\chi_n|(|x_n|)=
\chi_n(x_n)\stackrel{(\ref{Estimacion13Abril})}{\leq}\frac{1}{4m}.$$

Applying these  inequalities to $\chi(x)=\dis \sum_{n\in J}\chi_nx_n$ gives
$$
\frac{1}{4}<\frac{3}{10}=\frac{3}{20m}\abs{J}\leq\sum_{n\in J}\chi_nx_n=\chi(x)\leq\frac{1}{4m}\abs{J}=\frac{1}{2}.
$$
This implies that $\chi(x)\notin\T_+$, a contradiction in view of $x\in V_m$.

In both cases this proves that  $\chi\notin(G,\tauc)^\wedge$.
\end{proof}
Hence $\dis(G,\tauc)^\wedge=\bigoplus_\omega \Z_{m_n}^\wedge$.
\end{proof}

\noindent\textbf{Proof of Theorem C: } We have to prove that for any unbounded abelian group $G$, there exists a metrizable locally quasi-convex group topology which is not Mackey. We divide the proof in three cases, namely:
\begin{itemize}
\item[$(a)$] $G$ is non reduced.
\item[$(b)$] $G$ is non torsion.
\item[$(c)$] $G$ is reduced and torsion.
\end{itemize}

$(a)$ If $G$ is non reduced, write $d(G)$ its maximal divisible subgroup. Then $d(G)\cong\bigoplus_{\alpha}\Q\oplus\bigoplus_{i=1}^\infty\Z(p_i^\infty)^{(\beta_i)}$ \cite[Theorem 21.1]{Fuc}.

If $\alpha\neq 0$, then we there exists $H\leq G$, where $H\cong\Q$. Since, there exists a metrizable non-Mackey topology on $\Q$ (\cite{DlB}), applying Lemma \ref{lemma11.1Dadidi}, $G$ is not Mackey as well.

If $\alpha=0$, then there exists $p_i$ satisfying $\Z(p_i^\infty)\leq d(G)\leq G$. Since there exists a metrizable, non-Mackey topology on $\Z(p_i^\infty)$ (\cite{DlB}), there exists (by Lemma \ref{lemma11.1Dadidi}) a metrizable topology on $G$ which is not Mackey.

$(b)$ If $G$ is non torsion, there exists $x\in G$, such that $\hull x\cong\Z$. Equip $\hull\Z$ with the $\b$-adic topology, which is metrizable but not Mackey (\cite{AD}). %There exists an extension $\lambdab^*$ satisfying that $\lambdab^*$ is metrizable and $\hull x$ is open.
Applying Lemma \ref{lemma11.1Dadidi}, we get the result.

$(c)$ We can write $G$ as the direct sum of its primary components, that is $G=\bigoplus_pG_p$. Consider two cases:

$(c.1)$ $G_p\neq 0$ for infinitely many prime numbers $p$. Then, for some infinite set $\Pi$ of prime numbers,we have $\bigoplus_{p\in\Pi}\Z_p\hookrightarrow G$.  By Lemma \ref{theorem12Dadidi}, there exists a metrizable locally quasi-convex group topology on $\bigoplus_{p\in\Pi}\Z_p$ which is not Mackey. Hence, by  Lemma \ref{lemma11.1Dadidi}, there exists a metrizable locally quasi-convex group topology which is not Mackey on $G$.

$(c.2)$ $G_p\neq 0$ for finitely many $p$. Then, one of them is unbounded, say $G_p$. Let $B:=\bigoplus_{n=1}^\infty\Z_{p^n}^{(\alpha_n)}$ be a basic subgroup of $G_p$, i.e., $B$ is pure and $G_p/B$ is divisible \cite{Fuc}. Let us recall the fact that purity of $B$ means that $p^nB=p^nG\cap B$ for every $n\in \N$.

 We aim to prove that $B$ is unbounded. Assume for contradiction that $B$ is bounded, so $p^nB=0$ for some $n\in \N$. Then $\{0\}=  p^nB=p^nG\cap B$. One  has $(p^nG_p+B)/B\cong p^n G_p/(p^n G_p\cap B)= p^n G_p/p^n B\cong p^n(G_p/B)=G_p/B$; the last congruence holds,  since $B$ is pure in $G_p$ and the last equality is a consequence of the divisibility of $G_p/B$.\footnote{L: I added some intermediate steps and wrote $\cong$ instead of $=$;
 the old version is:\\
 By divisibility of $G_p/B$, one has $(p^nG_p+B)/B= p^n(G_p/B)=G_p/B$} This means that $p^nG_p+B=G_p$. Since $p^nG_p\cap B=0$, this is equivalent to $G_p=p^nG_p\oplus B$.
Then $p^nG_p\cong G_p/B$ is a divisible subgroup of the reduced group $G_p$. Hence $p^nG_p\cong G_p/B=0$ and $G_p = B$ is bounded, a contradiction.

 Since $B$ is unbounded there exists a sequence  $(n_i)\subseteq\N$ with $\alpha_{n_i}\neq 0$ such that $\bigoplus_{i=1}^\infty\Z_{p^{n_i}}\leq B\leq G$. By Lemma \ref{theorem12Dadidi}, there exists a metrizable locally quasi-convex group topology on $\bigoplus_{i=1}^\infty\Z_{p^{n_i}}$ which is not Mackey. Hence, by  Lemma \ref{lemma11.1Dadidi}, there exists a metrizable locally quasi-convex group topology which is not Mackey on $G$.

\vspace{0.2cm}

Now we can prove the main theorem.

\bigskip

\noindent \textbf{Proof of Theorem A:}
%
%%\begin{theorem}\label{theoremAimDadidi}
%\vspace{0.2cm} \noindent\textbf{Theorem A: }
We have to prove that for an abelian group  $G$ the following assertions are equivalent:

\begin{itemize}
\item[$(i)$] Every metrizable locally quasi-convex group topology on $G$ is Mackey.

\item[$(ii)$] $G$ is bounded.
\end{itemize}

$(i)$ implies $(ii)$ is Theorem C.
$(ii)$ implies $(i)$ is Theorem B.
%\end{proof}

\end{document}